\newtheorem{theorem}{Theorem}[section]
\newtheorem{lemma}[theorem]{Lemma}
\newtheorem{definition}[theorem]{Definition}
\newtheorem{proposition}[theorem]{Proposition}
\newtheorem{remark}[theorem]{Remark}
\newcommand{\circo}{\accentset{\circ}}
\DeclareMathOperator{\tr}{tr}
\newcommand{\ov}{\overline}
\newcommand{\N}{\mathbb N}
\newcommand{\Z}{\mathbb Z}
\newcommand{\e}{\epsilon}
\newcommand{\mc}{\mathcal}
\newcommand{\M}{\mc{M}}
\newcommand{\R}{\mathbb{R}}
\newcommand{\mbb}{\mathbb}
\DeclareMathOperator{\Pfaff}{Pfaff}
\DeclareMathOperator{\Tr}{Tr}
\DeclareMathOperator{\detg}{det_g}
\DeclareMathOperator{\contr}{contr}
\title[Conformally Invariant Integrals]{Global Conformal Invariants of Submanifolds}
\author{Andrea Mondino}
\address[A. Mondino]{ University of Warwick, Mathematics Institute, CV4 7AL Coventry, United Kingdom.}
\email{A.Mondino@warwick.ac.uk}
\author{Huy The Nguyen}
\address[H. T. Nguyen] {Queen Mary University of London, School of Mathematical Sciences, Mile End Road, E1 4NS London, United Kingdom.}
\email{h.nguyen@qmul.ac.uk}
\begin{document} 
\maketitle
\begin{abstract} 
The goal of the present paper is to investigate  the algebraic structure of global conformal invariants of submanifolds. These are defined to be  conformally invariant integrals of geometric scalars of the tangent and normal bundle. A famous example of a global conformal invariant is the Willmore energy of a surface. In codimension one we classify such invariants, showing that under a structural hypothesis (more precisely we assume the integrand depends separately on the intrinsic and extrinsic
curvatures, and not on their derivatives)  the integrand can only consist of an intrinsic scalar conformal invariant, an extrinsic scalar conformal invariant and the Chern-Gauss-Bonnet integrand. In particular, for codimension one surfaces, we show that the Willmore energy is the  unique global conformal invariant,  up to the addition of a topological term (the Gauss curvature, giving the Euler Characteristic by the Gauss Bonnet Theorem). A similar statement holds also for codimension two surfaces, once taking  into account an additional topological term given by the Chern-Gauss-Bonnet integrand of the normal bundle. We also discuss existence and properties of natural higher dimensional (and codimensional) generalizations of the Willmore energy.
\end{abstract}

\section{Introduction} 
Let us consider an $m$-dimensional Riemannian manifold $ ( \M^m, g ^m )$ isometrically immersed in a Riemannian manifold $ ( \bar \M^n , \bar g ^ n )$. The fundamental objects describing the \emph{intrinsic} geometry of  $( \M^m, g^m )$ are the metric $  g^m$, the curvature tensor $ R _{ ijkl} $, and the Levi-Civita connection. On the other hand, the fundamental  quantities  describing  the \emph{extrinsic} geometry of $ ( \M^m, g^m) $ as  submanifold of $ ( \bar \M^n , \bar g ^ n ) $ are the second fundamental form $ h _{ij } ^ \alpha $, the normal connection $   \nabla ^ \perp$, and the normal curvature $\bar R ^ \perp_{ ij\alpha\beta} $, where Roman indices indicate tangential directions and Greek indices indicate normal ambient directions.  It is well known (see Section \ref{sec:BM} for more details) that these geometric quantities are not mutually independent but must satisfy some compatibility conditions, the so called Gauss-Codazzi-Mainardi-Ricci equations.   A natural way  to define \emph{geometric scalars} out of this list of tensors is by taking tensor products and then contracting using the metric $\bar{g}$. More precisely, we first take a  finite number of tensor products, say
\begin{equation}\nonumber
R_{i_1 j_1 k_1 l_1} \otimes \ldots \otimes R^\perp_{i_r, j_r, \alpha_r, \beta_r}  \otimes \ldots \otimes  h_{i_s j_s} \quad,  
\end{equation}
thus obtaining a tensor of rank $4+\ldots+4+\ldots+2+\ldots+2$. Then, we repeatedly pick out pairs of indices in the above expression and contract them against each other using the metric $\bar{g}^{\alpha \beta}$ (of course, in case of  contractions not including the normal curvature $R^\perp$ it is enough to contract using $g^{ij}$). This can be viewed in the more abstract perspective of Definition  \ref{def:ComplContr} by saying that we consider a \emph{geometric complete contraction}
\begin{equation}\nonumber \label{eq:ContrGen0}
C(\bar{g},R,R^\perp,h) = \contr(\bar{g}^{\alpha_1 \beta_1}\otimes \ldots \otimes R_{i_1 j_1 k_1 l_1} \otimes \ldots \otimes R^\perp_{i_r, j_r, \alpha_r, \beta_r}  \otimes \ldots \otimes  h_{i_s j_s}) \quad. 
\end{equation}
Let us stress that a complete contraction is  determined by the \emph{pattern} according to which different indices contract against each other; for example, the complete contraction $R_{ijkl}\otimes R^{ijkl}$ is different from $R^i_{ikl} \otimes R_{s}^{ksl}$. By taking linear combinations of geometric  complete contractions  (for the rigorous meaning see Definition \ref{def:LinCombCC}), we construct  \emph{geometric scalar  quantities} 
\begin{equation}\nonumber \label{eq:PgRh0}
P(\bar{g},R, R^\perp, h):= \sum_{ l \in L } a _ l C ^l (\bar{g},R, R^\perp, h) \quad.
\end{equation}
The goal of the present paper is to classify those geometric scalar quantities which, once integrated over arbitrary submanifolds $(\M^m, g^m)$ of arbitrary manifolds $(\bar{\M}^n,\bar{g}^n)$, give rise to  \emph{global conformal invariants}.  More precisely, we say that the geometric scalar quantity $P(\bar{g},R, R^\perp, h)$  is a  \emph{global conformal invariant for $m$-submanifolds in $n$-manifolds} if the following holds: for any ambient Riemannian manifold $\bar{\M}^n$, any compact orientable $m$-dimensional immersed submanifold $\M^m$ of  $\bar{\M}^n$ and any $\phi \in C^\infty(\bar{\M})$,  if one considers the conformal deformation $\hat{\bar{g}}:= e ^{2 \phi(x)} \bar{g}$ and calls $\hat{R}, \hat{R}^\perp, \hat{h}$ the tensors computed with respect to the conformal metric $\hat{\bar{g}}$,   then 
\begin{equation}\nonumber \label{eq:ConInv0}
\int_{\M^m} P(\hat{\bar{g}},\hat{R}, \hat{R}^\perp, \hat{h}) \, d \mu_{\hat{g}}= \int_{\M^m} P(\bar{g},R, R^\perp, h)\, d \mu_g \quad.
\end{equation}

Let us mention that the corresponding classification  for  \emph{intrinsic} global conformal invariants of Riemannian manifolds was a classical problem in conformal geometry motivated also by theoretical physics (the goal being to understand the so called conformal anomalies):  indeed it is the celebrated   Deser-Schwimmer conjecture  \cite{DeserSchwimmer} which has recently been solved in a series of works by Alexakis \cite{AlexI,AlexII,AlexPf1, AlexPf2, AlexIV, AlexBook}.  Inspired by the aforementioned papers, we address the problem of an analogous classification for global conformal invariants, but this time, \emph{for submanifolds}.  Of course, as explained above, if one considers global conformal invariants for \emph{submanifolds} many other \emph{extrinsic} terms appear, namely the second fundamental form, the curvature of the normal bundle, and  the normal connection; therefore the zoology of global conformal invariants is more rich and the classification more complicated. 
\medskip

Let us note that the full class of local geometric scalars is known (for a general discussion
of such invariants see \cite{Gil}), and is substantially broader than the one considered
here; they include not only contractions in the curvatures and second fundamental forms,
but also covariant derivatives of these natural tensors. Such local Riemannian invariants
arise naturally in the asymptotics of the heat kernel over Riemannian manifolds; integrals
of local invariants appear in this connection also.
\\Therefore, the problem addressed in this paper can be seen as a special case of a broader
problem of understanding the global conformal invariants of submanifolds in general.
One expects the broader problem to be even harder (in general) than the problem of understanding
the global invariants of closed Riemannian manifolds, thus the present work proposes a natural pursuit: to understand the global invariants that depend (locally) only
on the ambient curvature and the second fundamental form of the submanifold, and not
on their covariant derivatives.
\medskip

A well-known example of a global conformal invariant for two-dimensional submanifolds (called from now on surfaces) is the \emph{Willmore energy}.  For an immersed surface $ f : \M ^ 2 \rightarrow ( \bar \M^ n , \bar g^n ) $ this is defined by 
\begin{equation}\label{eq:defW}
\mc {W} ( f ) =  \int _{ \M } | H | ^ 2 d \mu _ g + \int _{ \M } \bar {K}_{ \bar \M} (Tf(\M)) d \mu _g \quad,
\end{equation}
where $H= \frac{1}{2} g^{ij} h_{ij}$ is the mean curvature vector and  $\bar {K}_{ \bar \M} (Tf(\M))$ is the sectional curvature of the ambient manifold computed on the tangent space of $f(\M)$. Clearly, in case $\bar{\M}^n=\R^n$,  \eqref{eq:defW} reduces to the familiar Euclidean Willmore energy as $ \bar K _{ \R ^ n } = 0$. It is well known that  the Willmore energy in  Euclidean space is invariant under conformal transformations of the ambient manifolds, that is M\"obius transformations where the inversion is centered off the submanifold. In fact, more generally the conformal Willmore energy is invariant under conformal deformations of the ambient background metric. This can be seen by the following decomposition, 
\begin{align*}
\|h^\circ \| ^ 2 = \| h \| ^ 2 - 2 |H| ^ 2, \quad K_{\M} = \frac{1}{2} (4  |H| ^ 2 - \| h \| ^ 2 ) + \bar K _{ \bar \M },  
\end{align*}
where $h^\circ_{ij}:=h_{ij}-H g_{ij}$ is the traceless second fundamental form,  $K_{\M}$ is the Gauss curvature of $(\M,g)$, and in the second identity we just recalled the classical Gauss equation. 
It follows that the conformal Willmore energy can be written as 
\begin{align*}
   |H| ^ 2 + \bar K_{\bar \M } = \frac 12 \| h^\circ \| ^ 2 + K_\M.
\end{align*} 
Since $ \| h^\circ \| ^ 2 d \mu_g $ is a pointwise  conformal invariant and $ \int _{\M} K_{\M} d \mu_ g= 2 \pi \chi(\M)$ is a topological (hence, a fortiori,  global conformal) invariant by the Gauss-Bonnet theorem,  clearly any linear combination of the two is a global conformal invariant. A natural question  is whether  the Willmore functional is the unique energy having such an invariance property, up to topological terms.
Let us briefly mention that the Willmore energy has recently received much attention \cite{Blaschke, CaMo, BeRi, KMS, KS, LMS, LY,  Mon2, MR2, MontielUrbano, Riv,  Sim, Will}, and in particular the Willmore conjecture in codimension one has been solved \cite{MN}. 
\medskip 

We will show that, for codimension one surfaces, any global conformal invariant of a surface must be a linear combination of the norm squared of the traceless second fundamental form and the intrinsic Gauss curvature, that is the Willmore energy is the unique global  conformal invariant  up to the Gauss-Bonnet integrand which is a topological quantity (see Theorem \ref{thm:CD1surf}). 

A similar statement holds also for codimension two surfaces (see Theorem  \ref{thm:CD2surf}), once taking into account an additional topological term given by the Chern-Gauss-Bonnet integrand of the normal bundle.

For general submanifolds of codimension one, we show that if the global conformal invariant is a polynomial in the second fundamental form only, then it must be a contraction of the \emph{traceless} second fundamental form, that is it must be the integral of a \emph{pointwise} conformal invariant (see Theorem \ref{thm:P(g,h)}). Combining this with a theorem of Alexakis \cite[Theorem 1]{AlexI}, we show that if $m$ is even and  the global conformal invariant has no mixed contractions between the intrinsic and the extrinsic curvatures then the integrand must be a linear combination of contractions of the (intrinsic) Weyl curvature, contractions of the traceless second fundamental form and the integrand of the Chern-Gauss-Bonnet formula, see Theorem \ref{thm:P=P1+P2}.

As an application of these ideas, in the last Section \ref{sec:genWill}, we introduce a higher dimensional analogue of the Willmore energy for hypersurfaces in Euclidean spaces. Such new  energies are conformally invariant and  attain the  strictly positive lower bound  only at round spheres, with rigidity, regardless of the topology of the hypersurface (see Theorem \ref{thm:3} and Theorem \ref{thm:4}).

\medskip

\begin{center}

{\bf Acknowledgments}
\\This work was written while A.M. was supported by the ETH
Fellowship and H.T.N. was visiting the  {\it Forschungsinstitut f\"ur Mathematik} at the ETH Z\"urich.  They wish to thank ETH and FIM for the hospitality and the excellent
working conditions.
\end{center}
\medskip

\section{Background Material}\label{sec:BM}
\subsection{Complete contractions: abstract definition}
Following \cite{AlexI}, in this short section we define the notion of complete contractions.
\begin{definition}[Complete Contractions]\label{def:ComplContr}
Any complete contraction 
\begin{align*}
C = \contr( ( A ^ 1 ) _{ i_ 1\dots i_s} \otimes ( A ^ t ) _{ j_1 \dots j _ q }) 
\end{align*}
will be seen as a formal expression. Each factor $ ( A ^ l ) _{ i _ 1\dots i_s } $ is  an ordered set of slots.  Given the factors $ A ^ 1 _{ i _1\dots i _ s }, A ^ l _{ j_1\dots j _ q } $ a complete contraction is then a set of pairs of slots $ ( a _1, b_1 ), \dots, (a _ w, b _ w ) $ with the following properties:
if $ k \neq l , \{a _ l, b _ l \}\cap \{ a _k ,b _k \} = \emptyset $, $ a _k \neq b _k $ and $ \bigcup_{ i = 1 } ^ { w } \{ a _i, b _i \} = \{ i _ 1 , \dots, j_q \}.$ Each pair corresponds to a particular contraction.
  
 Two complete contractions 
 \begin{align*}
\contr(( A^1 ) _{ i_1\dots i _ s } \otimes \dots \otimes ( A ^ t ) _ { j _1\dots j _w} ) 
\end{align*}
and 
\begin{align*}
\contr( (B^1)_{f _1\dots f _ q} \otimes \dots \otimes ( B ^ {t'} )_{v _ 1\dots v _ z } ) 
\end{align*}
will be identical if $ t = t ', A ^ l = B ^ l$ and if the $ \mu$-th index in $ A ^ l$ contracts against the $ \nu$-th index in $ A^r$ then the $\mu$-th index in $ B^ l $ contracts against the $\nu$-th in $ B ^ r $.

For a complete contraction, the \emph{length} refers to the number of factors.
\end{definition}

\begin{definition}[Linear combinations of complete contractions]\label{def:LinCombCC}
Linear combinations of complete contractions are defined as expressions of the form
\begin{align*}
\sum_{ l \in L } a  _ l C ^l_1, \quad \sum_{ r \in R } b _r C_2 ^ r \quad,
\end{align*}
where each $ C ^ l _i$ is a complete contraction. 
Two linear combinations are identical if $ R = L$, $ a _ l = b _l $ and $ C_1 ^ l = C_2 ^ l$. A linear combination of complete contractions is identically zero if for all $ l \in L $ we have $ a _l =0 $. For any complete contraction, we will say that a factor $ ( A)_{ r _1\dots r _{s_l} } $ has an internal contraction if two indices in 
\begin{align*}
A _{ r_1\dots r _ {s_l} } 
\end{align*} 
are contracting amongst themselves.
\end{definition}

\subsection{Riemannian and Submanifold Geometry} \label{SSS:RSG}
Consider an $n$-dimensional Riemannian manifold $(\bar{\M} ^ n ,\bar{g} ^ n )$. Given $ x _ 0 \in \bar{\M}^ n$, let   $  ( x ^1, \dots , x ^ n)$ be a local coordinate system with  associated coordinate vector fields denoted by $ X^ \alpha $, that is $ X ^ \alpha = \frac { \partial }{ \partial x ^ \alpha } $. Called $\bar{\nabla}$ the Levi-Civita connection associated to  $(\bar{\M} ^ n ,\bar{g} ^ n )$, the covariant derivative $ \bar{\nabla} _{ \frac { \partial }{\partial x ^ \alpha} }$ will be shortly denoted by  $\bar{\nabla}_\alpha$. 

The curvature tensor  $ \bar{R} _{ \alpha \beta \gamma \eta } $ of $ \bar{g} ^ {n} _{ \alpha \beta  } $ is given by the commutator of the covariant derivatives, that is 
\begin{align}\label{eq:defR}
[ \bar{\nabla} _\alpha \bar{\nabla} _\beta  - \bar{\nabla} _\beta  \bar{\nabla} _\alpha ] X_\gamma = \bar{R} _{ \alpha\beta \gamma \eta} X ^ \eta \quad,
\end{align} 
which in terms of coordinate systems may be expressed by Christoffel symbols,
\begin{align*}
\bar{R} _{ \alpha \beta \gamma} ^ \eta = \partial_{ \beta  } \Gamma _{ \alpha \gamma } ^ \eta - \partial _\gamma \Gamma ^ \eta _{ \alpha \beta  } + \sum_{ \mu  } (\Gamma_{ \alpha \gamma } ^ \mu  \Gamma _{ \mu \beta  } ^ \eta   -\Gamma _{ \alpha \beta } ^ \mu  \Gamma _{ \mu \gamma } ^ \eta ). 
\end{align*}
The two Bianchi identities are then
\begin{align*}
&\bar{R}_{ \alpha \beta \gamma\eta} + \bar{R} _{ \gamma\alpha \beta  \eta } + \bar{R} _{\beta \gamma\alpha  \eta } = 0\\
&\bar{\nabla} _ \alpha  \bar{R} _{ \beta \gamma\eta \mu } + \bar{\nabla} _\gamma \bar{R}_{\alpha \beta \eta \mu } + \bar{\nabla} _{ \beta } \bar{R} _{ \gamma\alpha \eta  \mu } = 0. 
\end{align*}
Recall that, under a conformal change of metric $\hat{\bar{g}}^n= e ^{2 \phi (x)} \bar{g}^n(x)$, the curvature transforms as follows (see for instance \cite{Eastwood}):
\begin{eqnarray}
\bar{R}^{\hat{g}^n}_{\alpha \beta \gamma\eta  }&=& e ^{2\phi (x)} \big[\bar{R}^{\bar{g}^n}_{\alpha \beta \gamma\eta  }+\bar{\nabla}_{\alpha \eta  }\phi \bar{g}^n_{\beta \gamma}+ \bar{\nabla}_{\beta \gamma} \phi \bar{g}^n_{\alpha \eta  }-\bar{\nabla}_{\alpha \gamma}\phi \bar{g}^n_{\beta \eta  }- \bar{\nabla}_{\beta \eta  } \phi \bar{g}^n_{\alpha \gamma} \nonumber \\
                              && \quad \quad  \quad+ \bar{\nabla}_\alpha  \phi \bar{\nabla}_\gamma \phi  \bar{g}^n_{\beta \eta  } + \bar{\nabla}_\beta  \phi \bar{\nabla}_\eta   \phi  \bar{g}^n_{\alpha \gamma} - \bar{\nabla}_\alpha  \phi \bar{\nabla}_\eta   \phi  \bar{g}^n_{\beta \gamma} - \bar{\nabla}_\beta  \phi \bar{\nabla}_\gamma \phi  \bar{g}^n_{\alpha \eta  } \nonumber \\
                              &&\quad \quad \quad + |\bar{\nabla} \phi|^2 \bar{g}^n_{\alpha \eta  } \bar{g}^n_{\beta \gamma} - |\bar{\nabla} \phi |^2 \bar{g}^n_{\alpha \gamma} g_{\eta  \beta }    \big] \quad .\label{eq:Rhatg} 
\end{eqnarray}
Now let briefly introduce some basic notions of submanifold geometry. Given an $m$-dimensional manifold $\M^m$, $2\leq m <n$, we consider $f:\M^m \hookrightarrow  \bar{\M}^n$, a smooth immersion. Recall that for every fixed $\bar{x} \in \M^m$ one can find local coordinates $(x^1, \ldots , x^n)$ of $\bar{\M}^n$ on a neighborhood  $V_{f(\bar{x})}^{\bar{\M}}$ of $f(\bar{x})$ such that $\big((x^1\circ f), \ldots, (x^m\circ f) \big)$ are local coordinates on a neighborhood  $U_{\bar{x}}^{\M}$ of $\bar{x}$ in $\M^m$ and such that 
$$f\big(U_{\bar{x}}^{\M}\big)=\Big\{(x^1, \ldots, x^n)\in V_{f(\bar{x})}^{\bar{\M}} \; : \; x^{m+1}=\ldots =x^n=0\Big \} \quad. $$
Such local coordinates on $\bar{\M}$ are said to be  \emph{adapted} to $f(\M)$. We use the convention that latin index letters vary from $1$ to $m$ and refer to geometric quantities on $\M^m$, while greek index letters vary from $1$ to $n$ (or sometimes from $m+1$ to $n$ if otherwise specified) and denote quantities in the ambient manifold $\bar{\M}^n$ (or in the orthogonal space to $f(\M)$ respectively). 
In  adapted coordinates, it is clear that $X_1, \ldots, X_m$ are a bases for the tangent space of $f(\M^m)$ and that the restriction of the ambient metric  $\bar{g}^n$ defines an induced  metric on $\M^m$, given locally by 
$$g^m_{i j}:= \bar{g}^n(X_i, X_j)\quad .$$
Using standard notation, $(g^m)^{ij}$ denotes the inverse of the matrix $(g_m)_{ij}$, that is $(g^m)^{ik} (g^m)_{kj}=\delta_{ij}$.
For every $\bar{x} \in \M$, the ambient metric $\bar{g}^n$ induces the  orthogonal splitting
$$T_{f(\bar{x})} \bar{\M}= T_{f(\bar{x})} f(\M) \oplus  [T_{f(\bar{x})} f(\M)]^\perp \quad,  $$
where, of course,  $ [T_{f(\bar{x})} f(\M)]^\perp$ is the orthogonal complement of the $m$-dimensional subspace $T_{f(\bar{x})} f(\M) \subset T_{f(\bar{x})} \bar{\M}$.
We call $\pi_T: T_{f(\bar{x})} \bar{\M} \to T_{f(\bar{x})} f(\M)$ and $\pi_N=Id-\pi_T: T_{f(\bar{x})} \bar{\M} \to [T_{f(\bar{x})} f(\M)]^\perp$ the tangential and the normal projections respectively, one can define the \emph{tangential} and the \emph{normal connections} (which correspond to the Levi-Civita connections on $(\M, g)$ and on the normal bundle respectively)  by
\begin{eqnarray}\label{eq:tgNConn}
\nabla_{X_i} X_j&:=&\pi_T(\bar{\nabla}_{X_i} X_j), \; i,j=1,\ldots,m, \label{eq:tgConn} \\
\nabla^\perp _{X_i} X_{\alpha} &:=&\pi_N(\bar{\nabla}_{X_i} X_\alpha), \; i=1,\ldots,m, \alpha=m+1, \ldots, n.  \label{eq:NConn}
\end{eqnarray}
Associated to the tangential and normal connections we have the tangential and normal Riemann curvature tensors (which correspond to the curvature of $(\M, g)$ and of the normal bundle respectively) defined analogously to \eqref{eq:defR}:
\begin{eqnarray}
[ \nabla _i \nabla _j  - \nabla _j  \nabla _i ] X_k &= &R _{ i j k l} X ^ l \;, \quad i,j,k,l=1,\ldots, m \label{eq:defRt} \\
\left[{\nabla^\perp_i \nabla^\perp_j -  \nabla^\perp_j \nabla^\perp_i} \right] X_{\alpha} &= &R^\perp _{ i j \alpha \beta} X ^ \beta \;, \quad i,j=1,\ldots, m, \; \alpha, \beta=m+1, \ldots, n. \label{eq:defRn} 
\end{eqnarray}
The transformation of $R_{ijkl}$ and $R^\perp_{ij\alpha \beta}$ under a conformal change of metric  is analogous to  \eqref{eq:Rhatg}, just replacing $\bar{\nabla}$ with $\nabla$ or with $\nabla^{\perp}$ respectively.
The second fundamental form $h$ of $f$ is defined by
\begin{equation}\label{eq:defh}
h(X_i, X_j):= \pi_{N} (\bar{\nabla}_{ X_i} X_j)= \bar{\nabla}_{X_i} X_j- \nabla_{X_i} X_j  \quad .
\end{equation}
It can be decomposed orthogonally into its trace part, the \emph{mean curvature}
\begin{equation}\label{eq:defH}
H:=\frac{1}{m} (g^m)^{ij} h_{ij} \quad,
\end{equation}
and its trace free part, the \emph{traceless second fundamental form}
\begin{equation}\label{eq:hH}
h^\circ_{ij}:=h_{ij}-H g^m_{ij} \quad,
\end{equation}
indeed it is clear from the definitions that
\begin{equation}\label{eq:Splith}
h_{ij}=  h_{ij}^\circ + H g^m_{ij} \quad \text{ and } \quad \langle h^\circ, Hg^m \rangle = (g^m)^{ik} (g^m)^{jl} \; h^\circ_{ij} \; H g_{kl}=H \;  \Tr_{g^m} (h^\circ) =0 \quad.
\end{equation}
Under a conformal change of the ambient metric  $\hat{\bar{g}}^n= e ^{2 \phi (x)} \bar{g}^n(x)$, the above quantities change as follow:
\begin{equation}\label{eq:CChH}
h^{\hat{g}}_{ij}= e^{\phi} \big[h_{ij}-g^{m}_{ij}\; \pi_{N} (\bar{\nabla} \phi) \big], \quad H^{\hat{g}}=e^{- \phi} \left[ H -  \pi_{N} (\bar{\nabla} \phi) \right] \quad\text{and}\quad (h^{\hat{g}})^{\circ}_{ij}= e^{\phi}\; h^{\circ}_{ij} \quad.  
\end{equation}
Observe that, in particular, the endomorphism of $T f(\M)$ associated to $h^{\circ}$ is invariant under conformal deformation of the ambient metric, that is 
\begin{equation}
 [(h^{\hat{g}})^{\circ}]^i_j= [h^{\circ}]^i_j \quad.
 \end{equation}
 Finally let us recall the fundamental equations of Gauss and Ricci which link the ambient curvature $\bar{R}$ computed on $T f (\M)$ (respectively on $T f (\M)^{\perp}$) with the second fundamental form and the intrinsic curvature $R$ (respectively with the second fundamental form and the normal curvature $R^\perp$):
 \begin{eqnarray} 
 \bar{R}_{ijkl}&=&R_{ijkl}+(h_{il})_{\alpha} \; (h_{jk})^{\alpha}- (h_{ik})_{\alpha} \;  (h_{jl})^{\alpha} \quad,  \label{eq:Gauss} \\
 \bar{R}_{ij \alpha \beta}&=&R_{ij\alpha \beta}^\perp  + (h_{ik})_\alpha \; (h_j^k)_{\beta}-(h_{ik})_{\beta} \; (h_j^k)_{\alpha} \quad, \label{eq:Ricci}
 \end{eqnarray}
where, of course, $(h_{il})^{\alpha}$ denotes the $\alpha$-component of the vector $h_{il}\in T f (\M)^{\perp}\subset T \bar{\M}$ and where we adopted Einstein's convention on summation of repeated indices.

\subsection{Geometric complete contractions and global conformal Invariants of Submanifolds}
Given an immersed submanifold $f:\M^m\hookrightarrow \bar{\M}^n$, of course the above defined Riemannian curvature $R$, the curvature of the normal bundle $R^\perp$, and the extrinsic curvatures $h, h^\circ, H$ are geometric objects, that is they are invariant under change of coordinates and under isometries of the ambient manifold.  So they give a list of \emph{geometric tensors}. A natural way  to define \emph{geometric scalars} out of this list of tensors is by taking tensor products and then contracting using the metric $\bar{g}$. More precisely, we first take a  finite number of tensor products, say,
\begin{equation}
R_{i_1 j_1 k_1 l_1} \otimes \ldots \otimes R^\perp_{i_r, j_r, \alpha_r, \beta_r}  \otimes \ldots \otimes  h^{\circ}_{i_s j_s} \otimes \ldots \otimes H g_{i_t j_t} \quad,  
\end{equation}
thus obtaining a tensor of rank $4+\ldots+4+\ldots+2+\ldots+2$. Then, we repeatedly pick out pairs of indices in the above expression and contract them against each other using the metric $\bar{g}^{\alpha \beta}$ (of course, in case of  contractions not including the normal curvature $R^\perp$ it is enough to contract using $g^{ij}$). This can be viewed in the more abstract perspective of Definition  \ref{def:ComplContr} by saying that we consider a \emph{geometric complete contraction}
\begin{equation}\label{eq:ContrGen}
C(\bar{g},R,R^\perp,h) = \contr(\bar{g}^{\alpha_1 \beta_1}\otimes \ldots \otimes R_{i_1 j_1 k_1 l_1} \otimes \ldots \otimes R^\perp_{i_r, j_r, \alpha_r, \beta_r}  \otimes \ldots \otimes  h^{\circ}_{i_s j_s} \otimes \ldots \otimes H g_{i_t j_t} ) \quad. 
\end{equation}
Let us stress that a complete contraction is  determined by the \emph{pattern} according to which different indices contract against each other; for example, the complete contraction $R_{ijkl}\otimes R^{ijkl}$ is different from $R^i_{ikl} \otimes R_{s}^{ksl}$. By taking linear combinations of geometric  complete contractions  (for the rigorous meaning see Definition \ref{def:LinCombCC}), we construct  \emph{geometric scalar  quantities} 
\begin{equation}\label{eq:PgRh}
P(\bar{g},R, R^\perp, h):= \sum_{ l \in L } a _ l C ^l (\bar{g},R, R^\perp, h) \quad.
\end{equation}

\begin{remark}
As already mentioned in the introduction, let us note that the full class
of geometric scalar quantities is known (for a general discussion of such invariants see
\cite{Gil}), and is larger than the one considered here: they include not only contractions in
the curvatures and second fundamental forms, but also covariant derivatives of these
natural tensors.
\end{remark}

\begin{remark}
Notice that thanks to the Gauss \eqref{eq:Gauss} and Ricci \eqref{eq:Ricci} equations, one can express the ambient curvature $\bar{R}$ restricted on the tangent space  of $\M$ or restricted to the normal bundle (that is $\bar{R}_{ijkl}$ and $\bar{R}_{ij\alpha \beta}$) as a quadratic combination of  $R, R^\perp$ and $h$. This is the reason why we can assume it is not present  in the complete contractions  \eqref{eq:ContrGen} without losing generality. Analogously, thanks to  \eqref{eq:hH}, we can assume that $h$ is not present in the complete contractions but just $h^\circ$ and  $H g$.
\end{remark}

\begin{definition}[Weight of a geometric scalar quantity]
Let $P(\bar{g},R, R^\perp, h)$ be a  geometric scalar quantity  as in \eqref{eq:PgRh} and consider the homothetic rescaling $\bar{g} \mapsto t^2 \bar{g}$ of the ambient metric $\bar{g}$, for $t \in \R_{+}$. By denoting $R_{t^2\bar{g}}, R^\perp_{t^2\bar{g}}, h_{t^2\bar{g}}$ the tensors computed with respect to the rescaled metric $t^2 \bar{g}$,    if
$$P(t^2\bar{g},R_{t^2\bar{g}}, R^\perp_{t^2\bar{g}}, h_{t^2\bar{g}})=  t^K  P(\bar{g},R_{\bar{g}}, R^\perp_{\bar{g}}, h_{\bar{g}}), \quad \text{for some } K \in \Z \quad,$$
we then say that  $P(\bar{g},R, R^\perp, h)$ is  a  geometric scalar quantity of \emph{weight $K$}.
\end{definition}
Recall that, under a general conformal deformation $\hat{\bar{g}}= e ^{2 \phi(x)} \bar{g}$ of the ambient metric $\bar{g}$ on $\bar{\M}^n$, the volume form of the immersed $m$-dimensional submanifold $f(\M)$ rescales by the formula  $d\mu_{\hat{g}}=e ^{m \phi(x)}  d\mu_{g}$, where of course  $\hat{g}= e ^{2 \phi(x)} g$ is the induced conformal deformation on $f(\M)$. In particular, for every constant $t\in \R_+$, we have $d\mu_{t^2 g}= t^m d \mu_{g}$. Thus, for any scalar geometric quantity $P(\bar{g},R, R^\perp, h)$ of weight  $-m$, the integral $\int_{\M^m} P(\bar{g},R, R^\perp, h) \, d\mu_g$ is scale invariant for all compact orientable $m$-dimensional immersed submanifolds in any $n$-dimensional ambient Riemannian manifold.  We are actually interested in those scalar geometric quantity $P(\bar{g},R, R^\perp, h)$ of weight  $-m$ which give rise to integrals which are invariant not only under constant rescalings, but under general conformal rescalings.  Let us give a precise definition.

\begin{definition}[Global conformal invariants of submanifolds]\label{def:GCI}
Let $P(\bar{g},R, R^\perp, h)$ be a  geometric scalar quantity  as in \eqref{eq:PgRh} of weight $-m$ and consider the conformal rescaling $\hat{\bar{g}}:= e ^{2 \phi(x)} \bar{g}$ of the ambient metric $\bar{g}$, for $\phi \in C^\infty(\bar{\M})$. By denoting $\hat{R}, \hat{R}^\perp, \hat{h}$ the tensors computed with respect to the conformal metric $\hat{\bar{g}}$,   if 
\begin{equation}\label{eq:ConInv}
\int_{\M^m} P(\hat{\bar{g}},\hat{R}, \hat{R}^\perp, \hat{h}) \, d \mu_{\hat{g}}= \int_{\M^m} P(\bar{g},R, R^\perp, h)\, d \mu_g 
\end{equation}
for any ambient Riemannian manifold $\bar{\M}^n$, any compact orientable $m$-dimensional immersed submanifold $f(\M^m)\subset \bar{\M}^n$ and any $\phi \in C^\infty(\bar{\M})$, we then say that $\int_{\M^m} P(\bar{g},R, R^\perp, h)\, d \mu_g$ is a \emph{global conformal invariant for $m$-submanifolds in $n$-manifolds}.
\end{definition}

In this paper we address the question of classifying (at least in some cases) such global conformal invariants of submanifolds.

\subsection{The Operator $ I _{\bar{g},R, R^\perp, h} (\phi) $ and its Polarisations}  \label{SS:Ig}
Inspired by the work of Alexakis  \cite{AlexI,AlexII,AlexPf1, AlexPf2, AlexIV, AlexBook} on the classification of global conformal invariants of Riemannian manifolds, we introduce the useful operator $I_{\bar{g},R, R^\perp, h}(\phi)$, which ``measures how far the scalar geometric invariant    $P(\bar{g},R, R^\perp, h)$ is from being a global conformal invariant''. 
\begin{definition}\label{def:Iphi}
Let  $P(\bar{g},R, R^\perp, h)$ be a linear combination 
$$P(\bar{g},R, R^\perp, h):= \sum_{ l \in L } a _ l C ^l (\bar{g},R, R^\perp, h) \quad,$$
where each $C ^l (\bar{g},R, R^\perp, h)$ is  in the form \eqref{eq:ContrGen} and has weight $-m$, and assume that $P(\bar{g},R, R^\perp, h)$ gives rise to a global conformal invariant for $m$-submanifolds, that is it satisfies \eqref{eq:ConInv}.  We define the differential operator $I _{\bar{g},R, R^\perp, h} (\phi)$, which depends both on the geometric tensors $\bar{g},R, R^\perp, h$ and on the auxiliary function $\phi\in C^{\infty}(\bar{\M}^n)$ as
\begin{equation}\label{eq.defI}
I _{\bar{g},R, R^\perp, h} (\phi)(x):= e^{m \phi(x)} P(\hat{\bar{g}},\hat{R}, \hat{R}^\perp, \hat{h})(x)-P(\bar{g},R, R^\perp, h)(x)\quad,
\end{equation}
where we use the notation of Definition \ref{def:GCI}.
\end{definition}
Notice that, thanks to \eqref{eq:ConInv}, it holds
\begin{equation}\label{eq:IntIInv}
\int_{\M^m}  I _{\bar{g},R, R^\perp, h} (\phi) \, d \mu_g =0 \quad,
\end{equation}
for every Riemannian $n$-manifold $\bar{\M}^n$, every compact orientable $m$-submanifold $f(\M^m)\subset \bar{\M}^n$, and every function $\phi \in C^\infty(\bar{\M})$.
\\
By using the transformation laws for $R, R^\perp, h$ under conformal rescalings recalled in Section \ref{SSS:RSG},  it is clear that  $I _{\bar{g},R, R^\perp, h} (\phi)$ is a differential operator acting on the function $\phi$. In particular we can polarize, that is we can pick any $A>0$ functions $\psi_1(\cdot), \ldots, \psi_{A}(\cdot)$, and choose
$$\phi(x):=\sum_{l=1}^A \psi_l(x)\quad.$$
Thus, we have a differential operator  $I _{\bar{g},R, R^\perp, h} (\psi_1, \ldots, \psi_A)(\cdot)$ so that, by \eqref{eq:IntIInv}, it holds
$$\int_{\M^m}  I _{\bar{g},R, R^\perp, h} (\psi_1, \ldots, \psi_A) \, d \mu_g =0 \quad, $$
for every Riemannian $n$-manifold $\bar{\M}^n$, every compact orientable $m$-submanifold $f(\M^m)\subset \bar{\M}^n$, and any functions $\psi_1, \ldots, \psi_A  \in C^\infty(\bar{\M})$.
\\
Now, for any given functions $\psi_1(\cdot),\ldots, \psi_{A}(\cdot)$, we can consider the rescalings
$$\lambda_1 \psi_1(\cdot), \ldots, \lambda_{A} \psi_{A}(\cdot), $$
and, as above, we have the equation 
\begin{equation}\label{eq:IntLambda}
\int_{\M^m}  I _{\bar{g},R, R^\perp, h} (\lambda_1 \psi_1, \ldots, \lambda_A \psi_A) \, d \mu_g =0 \quad.
\end{equation}
The trick here is to see $\int_{\M^m}  I _{\bar{g},R, R^\perp, h} (\lambda_1 \psi_1, \ldots, \lambda_A \psi_A) \, d \mu_g$ as a polynomial $\Pi(\lambda_1, \ldots, \lambda_A)$ in the independent variables $\lambda_1, \ldots, \lambda_A$. But then, equation \eqref{eq:IntLambda} implies that such polynomial  $\Pi(\lambda_1, \ldots, \lambda_A)$ is identically zero. Hence, each coefficient of each monomial in the variables $\lambda_1, \ldots, \lambda_A$ must vanish. 

We will see later in the proofs of the results how to exploit this crucial trick.

\section{Global Conformal Invariants of Surfaces} 

It is well known that in the Euclidean space $ \R ^ n $, the Willmore energy $ \mc {W} ( f):=\int |H|^2 d \mu_g$ of a surface is invariant under conformal maps, that is under  M\"obius transformations with inversion centred off the surface. It can be shown that this is a consequence of the fact that the conformal Willmore energy of a surface immersed in a general Riemannian manifold $ f :\M^2 \hookrightarrow \bar{\M} ^ n $ 
\begin{align*}
\mc W_{conf}(f) =   \int_{ \M} | H| ^ 2 d \mu _g + \int _{ \M} \bar K d \mu_g 
\end{align*}  
where $ \ov K $ is the sectional curvature of the ambient space restricted to the surface, is invariant under conformal deformations of the ambient metric. Recall that by the Gauss equation one can write the intrinsic Gauss curvature $K$ of $(\M^2,g)$ as  $K =  ( |H| ^ 2 - \frac{1}{2} |h^\circ|^2 ) + \ov K $, where $ | h^\circ | ^ 2 = g^{ik}g^{jl} h^\circ_{ij} h^\circ_{kl}$ is the squared norm of the traceless second fundamental form, we can rewrite $\mc W_{conf}$ as 
$$\mc W_{conf}(f)=\frac{1}{2} \int _{ \M} | h^\circ | ^ 2 d \mu _g + \int _{ \M } K d \mu _g \quad . $$
Notice that both  $\int _{ \M} | h^\circ | ^ 2 d \mu _g$ and  $\int _{ \M } K d \mu _g$ are natural conformal invariants: the first integral is conformally invariant as $ |h^\circ  |^ 2 d \mu _g $ is a pointwise conformally invariant thanks to formula \eqref{eq:CChH}, and the second integral is conformally invariant by the Gauss-Bonnet theorem (more generally it is a \emph{topological} invariant). It trivially follows that any linear combination of the two integrands gives rise to a global conformal invariant.  Our next result is that actually in codimension one there are no other global conformal invariants and in codimension two the situation is analogous once also the normal curvature is taken into account.

\subsection{Global Conformal Invariants of Codimension One Surfaces}
As announced before,  as first result we show that, for codimension one surfaces, any global conformal invariant must be a linear combination of the squared norm of the traceless second fundamental form and the intrinsic Gauss curvature.

\begin{theorem}\label{thm:CD1surf}
Let $ P ( \bar{g} ,R, R^\perp, h  )=\sum_{l\in L} a_l C^l(\bar{g},R,R^\perp,h)  $ be a geometric scalar quantity for two-dimensional submanifolds  of codimension one made by linear combinations of complete contractions in the general form
\begin{equation}\label{eq:ContrGen1}
C^l(\bar{g},R,R^\perp,h) = \contr(\bar{g}^{\alpha_1 \beta_1}\otimes \ldots \otimes R_{i_1 j_1 k_1 l_1} \otimes \ldots \otimes R^\perp_{i_r, j_r, \alpha_r, \beta_r}  \otimes \ldots \otimes  h^{\circ}_{i_s j_s} \otimes \ldots \otimes H g_{i_t j_t} ) \quad, 
\end{equation}
and  assume that $ \int _{\M}P ( \bar{g} ,R, R^\perp, h  ) d \mu_{g}$ is a global conformal invariant, in the sense of Definition \ref{def:GCI}. 

Then there exist $a,b \in \R$ such that $ P$ has the following decomposition
\begin{align*}
P ( \bar{g} ,R, R^\perp, h  )= a K  + b | h^\circ |^ 2 \quad.
\end{align*}
\end{theorem}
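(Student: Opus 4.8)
The plan is to first restrict the tensors that can appear in $P$, then use the weight constraint to reduce $P$ to a three–term linear combination, and finally invoke the full conformal invariance to discard the extra term. For the reduction, I would note that in codimension one the normal bundle is a line bundle, so $R^\perp\equiv 0$; hence every complete contraction in \eqref{eq:ContrGen1} containing a factor $R^\perp$ vanishes, and $P$ may be assumed built only out of $R$, $h^\circ$, $Hg$ and the metric. Since $\M$ is a surface, the intrinsic curvature is pinned down by the Gauss curvature, $R_{ijkl}=K(g_{ik}g_{jl}-g_{il}g_{jk})$, so that the only nonvanishing complete contraction of a single factor $R$ is a multiple of $R=2K$. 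I would also record that the admissible contractions \eqref{eq:ContrGen1} carry no covariant derivatives, so no divergence terms can enter.

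The next step is a weight count. Since $\int_\M P\,d\mu_g$ is conformally invariant it is, a fortiori, scale invariant; as $d\mu_g$ has weight $m=2$, the integrand $P$ must have weight $-2$. Now each factor $h^\circ_{ij}$ and $Hg_{ij}$ scales like $t$ under $\bar g\mapsto t^2\bar g$ while $R_{ijkl}$ scales like $t^2$, and each contraction consumes a metric $g^{\alpha\beta}\sim t^{-2}$; hence a complete contraction with $p$ factors $h^\circ$, $q$ factors $Hg$ and $s$ factors $R$ has weight $-(p+q+2s)$. Thus weight $-2$ forces $p+q+2s=2$, leaving only $(p,q,s)\in\{(0,0,1),(2,0,0),(0,2,0),(1,1,0)\}$. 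The first gives $K$ by Step 1; the second gives only $|h^\circ|^2$, since the trace contraction of two $h^\circ$'s vanishes; the third gives only a multiple of $|H|^2$; and the mixed case produces contractions proportional to $\Tr_g(h^\circ)\,H=0$. Therefore there are constants $\alpha,\beta,\gamma\in\R$ with $P=\alpha K+\beta\,|h^\circ|^2+\gamma\,|H|^2$.

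Finally, to eliminate the $|H|^2$ term I would argue as follows. By the Gauss–Bonnet theorem $\int_\M K\,d\mu_g=2\pi\chi(\M)$ is a topological, hence conformal, invariant, and by \eqref{eq:CChH} the density $|h^\circ|^2\,d\mu_g$ is pointwise conformally invariant, so $\int_\M|h^\circ|^2\,d\mu_g$ is a conformal invariant as well. Subtracting these from the invariant $\int_\M P\,d\mu_g$ shows that $\gamma\int_\M|H|^2\,d\mu_g$ is conformally invariant. Using $\hat H=e^{-\phi}[H-\pi_N(\bar\nabla\phi)]$ together with $d\mu_{\hat g}=e^{2\phi}\,d\mu_g$ one computes
\begin{equation*}
\int_\M|\hat H|^2\,d\mu_{\hat g}=\int_\M\big|H-\pi_N(\bar\nabla\phi)\big|^2\,d\mu_g \quad.
\end{equation*}
Testing on $\M=S^2\subset\R^3$ (where $|H|\equiv1$) with $\phi$ chosen so that $\pi_N(\bar\nabla\phi)=\e\,\nu$, $\nu$ the unit normal parallel to $H$, the right–hand side equals $4\pi(1-\e)^2\neq4\pi=\int_{S^2}|H|^2\,d\mu_g$ for small $\e\neq0$. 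Hence $\int_\M|H|^2\,d\mu_g$ is not conformally invariant, so $\gamma=0$ and $P=\alpha K+\beta\,|h^\circ|^2$, as claimed.

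The step I expect to be the main obstacle is the enumeration in the weight count: one must be sure it is genuinely exhaustive. What makes it work is that both $R^\perp$ and all covariant derivatives are absent — the former by codimension one, the latter by the structural form \eqref{eq:ContrGen1} — while the trace-free property of $h^\circ$ and the two–dimensional curvature identity collapse everything to the three scalars $K$, $|h^\circ|^2$, $|H|^2$. The final elimination then rests on the fact that, since the ambient term $\bar K$ is not among the admissible factors, the conformally invariant Willmore combination $|H|^2+\bar K$ cannot be reconstructed, forcing $|H|^2$ to be absent on its own.
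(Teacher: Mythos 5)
Your proof is correct, but the way you dispose of the $|H|^2$ term is genuinely different from the paper's argument. The first half --- suppressing $R^\perp$ in codimension one, the scaling count forcing $p+q+2s=2$, and the explicit enumeration collapsing everything to $P=\alpha K+\beta\,|h^\circ|^2+\gamma\,|H|^2$ --- is a more explicit version of the paper's own reduction (the paper keeps the generic contraction with $r\in\{0,1,2\}$ factors of $Hg$ and never names $|H|^2$ as such). The divergence is in the elimination step: the paper subtracts only the Gauss--Bonnet term and then runs its general machinery, namely the operator $I^1_{(g,h)}(\phi)=e^{2\phi}P_1(\hat g,\hat h)-P_1(g,h)$, polarization in $t$ via $\phi\mapsto t\phi$, and localization by prescribing $\partial_N\phi$ to be an arbitrary compactly supported function in adapted coordinates, concluding that each coefficient of positive degree in $\partial_N\phi$ vanishes \emph{pointwise}, hence the degree in $H$ is zero. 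You instead subtract both known invariants and exhibit a single concrete counterexample: the round sphere $S^2\subset\R^3$ with a radial conformal factor (say a smoothed $\phi(x)=\epsilon|x|$, which indeed has $\pi_N(\bar\nabla\phi)=\epsilon\nu$ along $S^2$ and vanishing tangential gradient there), giving $\int|\hat H|^2\,d\mu_{\hat g}=4\pi(1-\epsilon)^2\neq 4\pi$ and so $\gamma=0$. Your route is more elementary and self-contained, and it makes transparent the role of the missing ambient term $\bar K$. What it does not buy is generality: it succeeds because after the enumeration exactly one unknown coefficient survives, so one test deformation yields one linear relation and that suffices; in Theorem \ref{thm:P(g,h)}, Theorem \ref{thm:P=P1+P2} and Theorem \ref{thm:CD2surf}, several independent $H$-degrees (and mixed contractions) can occur, a single example would give only one relation among all the unknown coefficients, and it is the polarization-and-localization argument --- which also proves the stronger pointwise conclusion $I^1_{(g,h)}(\phi)\equiv 0$ rather than the vanishing of one particular integral --- that scales to those settings.
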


\begin{proof}
First of all  notice that since by assumption here we are working in codimension one, the normal curvature $R^\perp$ vanishes identically so it  can be suppressed  without losing generality.
Moreover, by definition, $P(\bar{g}, R, h)$ is a linear combination of complete contractions  $C^ l (\bar{g},R,h)$ each  of weight $-2$. Observing that any contraction of $g^{-1}\otimes g^{-1} \otimes R$ is already of weight $-2$ and that any  contraction of $g^{-1} \otimes h$ is of weight $-1$, the only possibility for $P ( \bar{g} ,R ,h  )$ to be of weight $-2$ is that it decomposes as
\begin{eqnarray}
P(\bar{g},R,h)&=& \quad a \contr(g^{i_1j_1}\otimes g^{i_2 j_2} \otimes R_{i_3 j_3 i_4 j_4}) \nonumber \\
&&+ b \contr(g^{i_1j_1}\otimes g^{i_2 j_2}\otimes H g_{i_{3} j_{3}}\otimes \ldots \otimes H g_{i_{r+2} j_{r+2}}\otimes h^\circ_{i_{r+3} j_{r+3}}\otimes \ldots \otimes  h^\circ_{i_{r+4} j_{r+4}}  ),  \label{eq:Psplits}
\end{eqnarray} 
where in the above formula we intend that $r=0,1,2$ is the number of $H$ factors, and $2-r$ is the number of $h^\circ$ factors.
Clearly, since by assumption $\M$ is a 2-d manifold, the term  $\contr(g^{i_1j_1}\otimes g^{i_2 j_2} R_{i_3 j_3 i_4 j_4})$ is a (possibly null) multiple of the Gauss curvature.  To get the thesis it is therefore sufficient to prove that $r=0$, that is the second summand  in  \eqref{eq:Psplits} is completely expressed in terms of the traceless second fundamental form. Indeed any complete contraction of $g^{-1}\otimes g^{-1} \otimes h^\circ$ is a (possibly null) multiple of $|h^\circ|^2$.

To this aim observe that, since by the Gauss-Bonnet Theorem $\int_{\M} K d \mu_g$ is a global conformal invariant, called 
\begin{eqnarray}
P_1(g, h)&:=& P(\bar{g},R,h)- a K \nonumber \\
                        &=&  b \contr(g^{i_1j_1}\otimes g^{i_2 j_2}\otimes H g_{i_{3} j_{3}}\otimes \ldots \otimes H g_{i_{r+2} j_{r+2}}\otimes h^\circ_{i_{r+3} j_{r+3}}\otimes \ldots \otimes  h^\circ_{i_{r+4} j_{r+4}}  ), \label{eq:defP1gh}
\end{eqnarray}
we have that $\int_{\M} P_1(g,h) d \mu_g$ is a global conformal invariant for compact surfaces immersed in 3-manifolds, as difference of such objects.
\\Consider then an arbitrary compact surface $f(\M^2)$  immersed into an arbitrary  Riemannian $3$-manifold $(\bar{\M}^3, \bar{g}^3)$, and an arbitrary conformal rescaling $\hat{\bar{g}}^3=e^{2 \phi(x)} \bar{g}^3$ of the ambient metric by a smooth function $\phi\in C^\infty(\bar{\M})$. All the hatted geometric tensors $\hat{g}, \hat{h}, \hat{h}^\circ, \hat H$ denote the corresponding tensors computed with respect to the rescaled metric $\hat{\bar{g}}$. In Section \ref{SS:Ig} we defined the operator
$$I^1_{(g,h)}(\phi):= e^{2 \phi} P_1(\hat{g}, \hat{h})- P_1(g,h)\quad ,$$
and we observed that  the conformal invariance of the integrated quantities implies (see \eqref{eq:IntIInv}) 
\begin{equation}\label{eq:IntI=00}
\int_{\M} I^1_{(g,h)}(\phi) \, d \mu_{g}=0 \quad.
\end{equation}
From the formulas \eqref{eq:CChH} of the change of $h^\circ$ and $H$ under a conformal deformation of metric, it is clear that $I^1_{(g,h)}(\phi)$ does not depend directly on $\phi$ but just on $\pi_N(\bar{\nabla} \phi)= (\partial_N \phi) N $, the normal derivative of $\phi$. More precisely  $I^1_{(g,h)}(\phi)$ is a  polynomial in $\partial_N \phi$ exactly of  the same degree $0\leq r\leq 2$ as $P_1(g, h^\circ, H)$ seen as a polynomial in $H$.  

By considering $t\phi$ for $t \in \R$, we therefore get that 
$$I^1_{g, h}(t \phi)= \sum_{k=1}^r a_k C^k(g, h^\circ, H) \; t^k \left(\partial_N \phi \right)^k \quad.$$
Recalling now \eqref{eq:IntI=00}, we obtain that  $ \int_{\M} I^1_{(g,h)}(t\phi) \, d \mu_{g} $ vanishes identically as a polynomial in $t$, so
\begin{equation}\label{eq:IntIk0}
0=\frac{d^k}{dt^k}\big|_{{t=0}}  \int_{\M} I^1_{(g,h)}(t\phi) \, d \mu_{g}  = k!  \int_{\M}  a_k C^k(g, h^\circ, H) \left( \partial_N \phi \right)^k   \, d \mu_{g},\quad \forall k=0,\ldots,r.
\end{equation}
Pick an arbitrary point $x \in \M$; by choosing local coordinates in $\bar{\M}^3$ adapted to $f(\M)$ at $f(x)$, it is easy to see that for any given function $\psi \in C^{\infty}_c(\M)$ supported in such coordinate neighborhood  of $x$, there exists $\phi \in C^\infty(\bar{\M})$ such that
 $$\psi = \frac{\partial \phi}{\partial x^3}  \circ f=  \partial_N \phi   \circ f \quad. $$
 By plugging such arbitrary $C^\infty_c(\M)$ function   $\psi$ in place of  $\partial_N \phi$ in \eqref{eq:IntIk0}, we obtain that not only the integrals  but the integrands themselves must vanish, that is $a_k C^k(g, h^\circ, H) \equiv 0$ on $\M$. It follows that $I^1_{g,h}(\phi)\equiv 0$ or, in other words, the degree of  $I^1_{g, h}(t \phi)$ as a polynomial in $t$ is $0$. By the above discussion we have then that $r=0$, which was our thesis.
 \end{proof}

\subsection{Global Conformal Invariants of Codimension Two Surfaces}

Let $ (\bar{\M} ^ 4, \bar{g} _{ \alpha \beta } ) $ be a four dimensional Riemannian manifold and $ f : \M \hookrightarrow \bar{\M}$ an immersion of an oriented compact surface $\M^2$. 
In local coordinates, if  $ \{ e _1, e _ 2, e _ 3 , e _ 4 \} $ is an adapted orthonormal frame, so that $ \{ e _1, e _2 \} $ is a frame on $ T f(\M)$ and $ \{ e _3, e _ 4\} $ is a frame on $ [T  f (\M)]^\perp$  we  define 
\begin{align*}
K ^ \perp = R ^ \perp( e _1, e _2, e _3, e _ 4 )\quad,
\end{align*}
where $ R ^ \perp$ is the curvature tensor of the normal connection defined in \eqref{eq:defRn}. Note that  $K^\perp$ is well defined up to a sign depending on orientation,  indeed by the symmetries of the normal curvature tensor, for a codimension two surface  $R^\perp$ has only one non-zero component, namely $\pm K ^ \perp$.  
We also denote by 
$$ \bar K^ \perp = \bar R ( e _1, e _2 , e _ 3 ,e  _4 )$$
the ambient curvature evaluated on the normal bundle.

We note here  that the normal curvature is not a complete contraction of the form described above. The anti-symmetry of the curvature means that the normal curvature does not appear as a complete contraction. In fact, the normal curvature is an odd invariant (as opposed to an even invariant like the Gauss curvature). The normal curvature however is an even invariant if we include the volume form. To that end, we consider odd invariants as defined below.
\begin{definition}[(Odd) Complete Contractions \cite{Bailey1994}]\label{def:ComplContr}
Denote with $\e$ the volume form of $\R^{4}$.  Any complete contraction 
\begin{align*}
C = \contr( \e\otimes( A ^ 1 ) _{ i_ 1\dots i_s} \otimes ( A ^ t ) _{ j_1 \dots j _ q }) 
\end{align*}
will be seen as a formal expression.  Each factor $ ( A ^ l ) _{ i _ 1\dots i_s } $ is  an ordered set of slots.  Given the factors $ A ^ 1 _{ i _1\dots i _ s }, A ^ l _{ j_1\dots j _ q }$, a complete contraction is then a set of pairs of slots $ ( a _1, b_1 ), \dots, (a _ w, b _ w ) $ with the following properties:
if $ k \neq l , \{a _ l, b _ l \}\cap \{ a _k ,b _k \} = \emptyset $, $ a _k \neq b _k $ and $ \bigcup_{ i = 1 } ^ { w } \{ a _i, b _i \} = \{ i _ 1 , \dots, j_q \}.$ Each pair corresponds to a particular contraction.
  
 Two complete contractions 
 \begin{align*}
\contr(( A^1 ) _{ i_1\dots i _ s } \otimes \dots \otimes ( A ^ t ) _ { j _1\dots j _w} ) 
\end{align*}
and 
\begin{align*}
\contr( (B^1)_{f _1\dots f _ q} \otimes \dots \otimes ( B ^ {t'} )_{v _ 1\dots v _ z } ) 
\end{align*}
will be identical if $ t = t ', A ^ l = B ^ l$ and if the $ \mu$-th index in $ A ^ l$ contracts against the $ \nu$-th index in $ A^r$ then the $\mu$-th index in $ B^ l $ contracts against the $\nu$-th in $ B ^ r $.

For a complete contraction, the \emph{length} refers to the number of factors.
\end{definition}
   
The definitions for an odd complete contraction are exactly the same as for contractions as defined above with the obvious changes.  


For a codimension two surface, we note that we have two topological invariants: the integral of the Gauss curvature $K$ giving the Euler Characteristic of $\M$ via  the Gauss-Bonnet Theorem, and the integral of the normal curvature $K^\perp$ which gives the Euler characteristic of the normal bundle,
\begin{equation}\label{eq:TopInv}
\int_{\M} K d \mu _g = 2 \pi \chi(\M) , \quad  \int _{\M} K ^ \perp d \mu _g = 2 \pi \chi^ \perp (f(\M)).
\end{equation}
As already observed,  $|h^ \circ|^2 d \mu_g$ is a \emph{pointwise} conformal invariant. Hence any linear combination of these three integrands is a global conformal invariant and  in the next theorem we show that there are no others.


\begin{theorem}\label{thm:CD2surf}
Let $ P ( \bar{g} ,R, R^\perp, h  )=\sum_{l\in L} a_l C^l(\bar{g},R,R^\perp,h)  $ be a geometric scalar quantity for two-dimensional submanifolds  of codimension two made by linear combinations of (possibly odd) complete contractions in the general form
\begin{equation}\label{eq:ContrGen1}
C^l(\bar{g},R,R^\perp,h) = \contr(\e\otimes\bar{g}^{\alpha_1 \beta_1}\otimes \ldots \otimes R_{i_1 j_1 k_1 l_1} \otimes \ldots \otimes R^\perp_{i_r, j_r, \alpha_r, \beta_r}  \otimes \ldots \otimes  h^{\circ}_{i_s j_s} \otimes \ldots \otimes H g_{i_t j_t} ) \quad, 
\end{equation}
where $ \e$ is the volume form of $ \R^4$ and assume that $ \int _{\M}P ( \bar{g} ,R, R^\perp, h  ) d \mu_{g}$ is a global conformal invariant, in the sense of Definition \ref{def:GCI}. 

Then there exist $a,b,c \in \R$ such that $ P$ has the following decomposition
\begin{align*}
P ( \bar{g} ,R, R^\perp, h  )= a K +b K^\perp  + c | h^\circ |^ 2 \quad.
\end{align*}
\end{theorem}

\begin{proof}
Exactly as in the proof of Theorem \ref{thm:CD1surf}, if   $P ( \bar{g} ,R, R^\perp ,h  ) $ gives rise to a global conformal invariant it must necessarily be a linear combination of complete contractions  $C^ l (\bar{g},R, R^\perp, h)$ each  of weight $-2$. Observing that any contraction of $g^{-1}\otimes g^{-1} \otimes R$ and of $\e\otimes \bar{g}^{-1}\otimes \bar{g}^{-1} \otimes R^\perp$  is already of weight $-2$,  and that any  contraction of $g^{-1} \otimes h$ is of weight $-1$, the only possibility for $P ( \bar{g} ,R, R^\perp, h )$ to be of weight $-2$ is that it decomposes as
\begin{eqnarray}
P(\bar{g},R, R^\perp, h)&=& \quad a \contr(g^{i_1j_1}\otimes g^{i_2 j_2} \otimes R_{i_3 j_3 i_4 j_4}) + b \contr(\bar{g}^{\alpha_1 \beta_1}\otimes g^{\alpha_2 \beta_2} \otimes R^\perp_{\alpha_3 \beta_3 \alpha_4 \beta_4})   \nonumber \\
&&+ c \contr(\e\otimes g^{i_1j_1}\otimes g^{i_2 j_2}\otimes H g_{i_{3} j_{3}}\otimes \ldots \otimes H g_{i_{r+2} j_{r+2}} \nonumber \\
&& \quad \quad \quad  \quad \quad  \quad \quad \quad \quad \quad \; \otimes h^\circ_{i_{r+3} j_{r+3}}\otimes \ldots \otimes  h^\circ_{i_{r+4} j_{r+4}}  ),  \nonumber 
\end{eqnarray} 
where in the above formula we intend that $r=0,1,2$ is the number of $H$ factors, and $2-r$ is the number of $h^\circ$ factors.
Clearly, since by assumption $\M$ is a 2-d manifold, the term  $\contr(g^{i_1j_1}\otimes g^{i_2 j_2} R_{i_3 j_3 i_4 j_4})$ is a (possibly null) multiple of the Gauss curvature.  Analogously, since $f(\M)\subset \bar{\M}$ is a   codimension two submanifold, the term $\contr(\e\otimes \bar{g}^{\alpha_1 \beta_1}\otimes g^{\alpha_2 \beta_2} \otimes R^\perp_{\alpha_3 \beta_3 \alpha_4 \beta_4}) $ 
is a (possibly null) multiple of the normal curvature $K^\perp$.  But,  by  \eqref{eq:TopInv},  we already know that $\int_{\M} K d \mu_g$ and $\int_{\M} K^\perp d \mu_g$ are global conformal invariants so, called
\begin{eqnarray}
P_1(g, h)&:=& P(\bar{g},R,R^\perp, h)- a K- bK^\perp \nonumber \\
                        &=&  b \contr(g^{i_1j_1}\otimes g^{i_2 j_2}\otimes H g_{i_{3} j_{3}}\otimes \ldots \otimes H g_{i_{r+2} j_{r+2}}\otimes h^\circ_{i_{r+3} j_{r+3}}\otimes \ldots \otimes  h^\circ_{i_{r+4} j_{r+4}}  ), \nonumber
\end{eqnarray}
and so $\int_{\M} P_1(g,h) d \mu_g$ is a global conformal invariant for compact surfaces immersed in 3-manifolds, as the difference of such objects. The thesis can be now achieved by repeating verbatim the second part of the proof of  Theorem \ref{thm:CD1surf}.
\end{proof}

\subsubsection{Two examples: the complex projective plane and the complex hyperbolic plane} 
Two particular cases of interest (apart from the spaces forms) are $ \mbb{CP } ^ 2 $ and $ \mbb{CH}^  2$ the complex projective plane and the complex hyperbolic plane respectively. These are K\"ahler manifolds with their standard K\"ahler form $ \Omega$ of constant holomorphic sectional curvature and unlike their real counterparts, $\mbb{S}^4$ and $ \mbb H ^ 4$, they are not locally conformal to $ \mbb {C} ^ 2 $.

Let us consider an immersion $ \phi : \M \hookrightarrow \bar{\M}$ of an oriented  surface, where $ \bar{\M} = \mbb {CP}^ 2, \mbb{CH}^2$. The K\"ahler function $C$ on $\M $ is defined by $ \phi ^ * \Omega = C d \mu_g$. Only the sign of $ C$ depends on the orientation, hence $ C ^2 $ and $ |C|$ are well defined for non-orientable surfaces. 
The K\"ahler function satisfies 
\begin{align*}
-1 \leq C \leq 1 \quad.
\end{align*} 
By direct computation, we  find that the Willmore functional is equal to
\begin{align*}
\mc {W}_{\mbb{CP}^ 2 }( \phi) = \int _{\M}  \left( | H | ^ 2 + \ov K \right) \, d \mu_g = \int _{ \M} \left( | H| ^ 2 + 1 + 3 C ^ 2 \right) d \mu_g \quad ,
\end{align*}
and 
\begin{align*}
\mc{W}_{\mbb{CH} ^ 2 } ( \phi )= \int _{\M} \left( | H | ^ 2 + \ov K \right) d \mu_g = \int _{ \M} \left( | H| ^ 2 - 1 -3 C ^ 2 \right) d \mu_g \quad.
\end{align*}
By the Ricci equation \eqref{eq:Ricci}, we also have
\begin{align*}
\ov K ^ \perp = \ov  R_{1234} = K^\perp - \left(( \circo h_{1p})_{3}  (\circo h _{ 2p})_{4} - (  \circo h _{ 2 p})_{ 3 }( \circo  h _{ 1 p})_{4 } \right)
\end{align*}
which, applying the symmetries of the curvature tensor, can be written as a complete contraction.  Therefore, the following energies are all global conformal invariant:

\begin{align} 
\mc W ^ +_{\mbb {CP}^2} ( \phi) &= \int _{ \M} | H| ^ 2 + \ov K - \ov K ^ \perp = \int_{\M} (|H|^ 2 + 6 C ^ 2 ) d \mu_g \label{eq:WCP21} \\ 
\mc W ^ -_{\mbb {CP}^2} ( \phi) &= \int _{ \M} | H| ^ 2 + \ov K + \ov K ^ \perp = \int_{\M} (|H|^ 2 + 2) d \mu_g \quad,   \label{eq:WCP22}
\end{align}
and 
\begin{align}
\mc W ^ +_{\mbb {CH}^2} ( \phi) &= \int _{ \M} | H| ^ 2 + \ov K - \ov K ^ \perp = \int_{\M} (|H|^ 2 - 6 C ^ 2 ) d \mu_g  \label{eq:WCH21}  \\
\mc W ^ -_{\mbb {CH}^2} ( \phi) &= \int _{ \M} | H| ^ 2 + \ov K + \ov K ^ \perp = \int_{\M} (|H|^ 2 - 2) d \mu_g \quad.  \label{eq:WCH22}
\end{align}
Let us remark that the energies \eqref{eq:WCP21},\eqref{eq:WCP22}  have already been object of investigation in  \cite{MontielUrbano}, where it was shown that  $ \mc {W}^-_{\mbb {CP} ^  2} ( \phi) \geq 4 \pi \mu - 2 \int_{\Sigma}|C| d \mu_g  $ where $\mu$ is the maximum multiplicity of the immersion (i.e. the maximal number of pre-images via the immersion map). Moreover in the same paper it was shown that equality holds if and only if $ \mu =1 $ and $\M$ is either a complex projective line or a totally geodesic real projective plane,  or $ \mu = 2 $ and $ \M$ is a Lagrangian Whitney sphere.

\begin{remark}
Since the goal of the present paper is to investigate the structure of global conformal invariants, we recalled the definition of  $\mc W ^{\pm}_{\mbb {CP}^2}$ given in   \cite{MontielUrbano} and we defined the new functionals $\mc W ^{\pm}_{\mbb {CH}^2}$ in order to give interesting examples of Willmore-type energies in codimension 2. In a forthcoming work we will address the question wether    $\mc W ^{\pm}_{\mbb {CH}^2}$ satisfy analogous properties as  $\mc W ^{\pm}_{\mbb {CP}^2}$. 
\end{remark}

\section{Global Conformal Invariants of Submanifolds} 
Let us consider a geometric scalar  quantity $ P (g^m, h^m)$ of the form 
\begin{align} \label{eqn_star}
\sum_{ l \in L } a _l C^ l ( g^m , h^m) \quad,
\end{align}
 where each $C  ^ l$ is a complete contraction 
\begin{align} \label{eqn_contract}
\contr(g^{i_1 j_1}\otimes\ldots \otimes  g^{i_s j_s} \otimes h_{ i _{s+1}  j_{s+1}}\otimes \dots \otimes h _{ i _{2s} j_{2s}}) \quad ; 
\end{align}
that is  we consider complete contractions as defined in  \eqref{eq:ContrGen} but depending just on the second fundamental form $h^m$ and the induced metric $g^m$ for immersed $m$-submanifolds $f(\M^m)$ in Riemannian $n$-manifolds $(\bar{\M}^n,\bar{g}^n)$. 

Our first  goal in this section  is to understand the structure of geometric scalar quantities  \eqref{eqn_star} giving rise to global conformal invariants for submanifolds, in the sense of Definition \ref{def:GCI}.
This is exactly  the content of the next result.
  
\begin{theorem}\label{thm:P(g,h)}
Let $ P ( g^m ,h^m  ) $ be as in  \eqref{eqn_star} with each $ C^l$ of the form \eqref{eqn_contract}, and  assume that $ \int _{\M}P( g ^ m ,h ^ m) d \mu_{g^m}$ is a global conformal invariant, in the sense of Definition \ref{def:GCI}. Then there exists a  \emph{pointwise} conformal invariant $W ( g ^ m, \circo h^m )$ of weight $ -m$  depending only on the traceless second fundamental form $\circo h$ contracted with the induced metric $g^m$ so that 
\begin{align*}
P( g ^ m , h ^ m ) = W ( g ^ m,  \circo h ^m )\quad .
\end{align*} 
In other words, for every $l \in L$ one has that  $ C ^l$ in \eqref{eqn_star} is a complete contraction of  weight $-m$ of  the form 
\begin{align} \label{eqn_contract}
\contr(g^{i_1 j_1}\otimes\ldots \otimes  g^{i_m j_m} \otimes \circo h_{ i _{m+1}  j_{m+1}}\otimes \dots \otimes \circo h _{ i _{2m} j_{2m}}) \quad . 
\end{align}

\end{theorem}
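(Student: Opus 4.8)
The plan is to follow the strategy of the proof of Theorem~\ref{thm:CD1surf}, carried out now in arbitrary dimension $m$ by an induction on the degree in the mean curvature. Exactly as in that proof, invariance under constant rescalings $\bar g \mapsto t^2 \bar g$ (which multiply $d\mu_g$ by $t^m$) forces each $C^l$ in \eqref{eqn_star} to have weight $-m$; since any complete contraction built from $g^{-1}$ and $h$ has weight equal to minus the number of factors of $h$, every $C^l$ must contain exactly $m$ factors of $h$ and $m$ factors of $g^{-1}$. Working in codimension one, so that $H$ is a genuine scalar and $\pi_N(\bar{\nabla}\phi)=(\partial_N\phi)N$, I would substitute $h=h^\circ+Hg$ into every factor and expand, organizing the result as a polynomial in $H$,
\begin{equation*}
P(g^m,h^m)=\sum_{k=0}^{m} H^k\, P_k(g^m,h^\circ),
\end{equation*}
where each $P_k$ is a complete contraction of $g^{-1}$ against exactly $m-k$ copies of $h^\circ$. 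The term $P_0$ is a contraction of $h^\circ$ with the metric of the form \eqref{eqn_contract}; since $h^\circ_{ij}\mapsto e^\phi h^\circ_{ij}$ and $g^{ij}\mapsto e^{-2\phi}g^{ij}$ by \eqref{eq:CChH}, the product $P_0\,d\mu_g$ is a pointwise conformal invariant and is the desired $W(g^m,h^\circ)$. Hence the whole theorem reduces to proving $P_k\equiv 0$ for every $k\geq 1$.

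Next I would compute the operator $I_{\bar g,R,R^\perp,h}(\phi)=e^{m\phi}P(\hat{\bar g},\hat R,\hat R^\perp,\hat h)-P(\bar g,R,R^\perp,h)$ of Section~\ref{SS:Ig} explicitly. Using \eqref{eq:CChH}, namely $h^\circ\mapsto e^\phi h^\circ$, $g^{-1}\mapsto e^{-2\phi}g^{-1}$ and $\hat H=e^{-\phi}(H-\partial_N\phi)$, the $k$-th summand transforms as $e^{m\phi}\hat H^k P_k(\hat g,\hat h^\circ)=(H-\partial_N\phi)^k P_k(g,h^\circ)$, the bare powers of $e^\phi$ cancelling exactly against the volume prefactor $e^{m\phi}$. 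Subtracting $P$ yields the clean expression
\begin{equation*}
I_{\bar g,R,R^\perp,h}(\phi)=\sum_{k=1}^{m}\big[(H-\partial_N\phi)^k-H^k\big]\,P_k(g^m,h^\circ),
\end{equation*}
which depends on $\phi$ only through $\partial_N\phi$ and has no constant term, just as anticipated in the surface case.

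The heart of the argument is then the polarization/rescaling trick of Section~\ref{SS:Ig}: replacing $\phi$ by $t\phi$ turns $\int_{\M}I_{\bar g,R,R^\perp,h}(t\phi)\,d\mu_g$ into a polynomial in $t$, which vanishes identically by \eqref{eq:IntIInv}, so every coefficient is zero. I would argue by downward induction on $k$. The coefficient of $t^m$ comes only from the $k=m$ summand and equals $(-1)^m(\partial_N\phi)^m P_m(g,h^\circ)$; choosing, exactly as in Theorem~\ref{thm:CD1surf}, an ambient $\phi$ with $\partial_N\phi\circ f$ equal to an arbitrary $\psi\in C^\infty_c(\M)$ and then localizing with bump functions forces $P_m\equiv 0$ pointwise. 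Assuming $P_m=\dots=P_{k+1}=0$, the coefficient of $t^k$ reduces to $(-1)^k(\partial_N\phi)^k P_k(g,h^\circ)$, since every higher contribution has already been eliminated, and the same localization gives $P_k\equiv 0$. After $m$ steps only $P_0$ survives, so $P=P_0=W(g^m,h^\circ)$, which is the asserted pointwise conformal invariant of the form \eqref{eqn_contract}.

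The step I expect to be the main obstacle is the bookkeeping that isolates a single $P_k$ in the coefficient of $t^k$: this works only because the induction has already killed $P_{k+1},\dots,P_m$, and because the realizability of an arbitrary $\psi\in C^\infty_c(\M)$ as a normal derivative $\partial_N\phi\circ f$ is what lets me pass from vanishing of the integral to pointwise vanishing of the integrand. The codimension-one hypothesis is essential here, as it makes $H$ a scalar and $\pi_N(\bar{\nabla}\phi)=(\partial_N\phi)N$ a single function, legitimizing the expansion in powers of $H$ and $\partial_N\phi$; in higher codimension $H$ is vector-valued and the normal indices must be contracted, which would complicate both the expansion and the choice of test functions.
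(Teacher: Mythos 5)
Your proof follows the paper's own strategy step for step: the constant-rescaling weight argument forcing exactly $m$ factors of $h$ and $m$ factors of $g^{-1}$, the splitting $h=h^\circ+Hg$, the operator $I_{(g,h)}(\phi)$ of Section \ref{SS:Ig}, the observation that it depends on $\phi$ only through the normal gradient, the substitution $\phi\mapsto t\phi$ turning \eqref{eq:IntIInv} into the identical vanishing of a polynomial in $t$, and the realization of an arbitrary $\psi\in C^\infty_c(\M)$ as $\partial_N\phi\circ f$ to upgrade vanishing of integrals to pointwise vanishing of integrands. Within its stated scope your argument is correct, and your closed formula $I(\phi)=\sum_{k\geq 1}\big[(H-\partial_N\phi)^k-H^k\big]P_k(g,h^\circ)$ together with the downward induction on $k$ is tidier bookkeeping than the paper's, which instead argues that $I(t\phi)$ has the same degree in $t$ as $P$ has in $H$ and that $I\equiv 0$ forces this degree to be zero.

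The genuine gap is the codimension restriction. Theorem \ref{thm:P(g,h)} is stated for $m$-submanifolds of $n$-manifolds with no hypothesis $n=m+1$ (Definition \ref{def:GCI} and the setup \eqref{eqn_star} fix both $m$ and $n$; the introduction loosely describes the result as codimension one, but the statement and the paper's proof are not so restricted), and the paper's proof is carried out in that generality. You restrict to $n=m+1$ precisely where the work lies: in higher codimension $H$ and the contracted slots of the $h^\circ$ factors are normal-bundle valued, so $P$ is a polynomial in the components $H^{m+1},\ldots,H^{n}$ rather than in a single scalar, and $\pi_N(\bar\nabla\phi)$ is a normal field with $n-m$ independent components. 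The paper handles this by writing the $t^k$-coefficient as a homogeneous degree-$k$ polynomial in the components of $\pi_N(\bar\nabla\phi)$, then choosing, in adapted coordinates, $n-m$ arbitrary functions $\psi^1,\ldots,\psi^{n-m}\in C^\infty_c(\M)$ with $\psi^i=(\bar\nabla\phi)^{m+i}\circ f$; pointwise vanishing for all such normal fields then forces the symmetric coefficient tensors to vanish (a polarization step left implicit in the paper), after which your downward induction would go through verbatim. Since you explicitly flag the vector-valued case as a complication and stop there, what you have written proves the theorem only when $n=m+1$.
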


\begin{proof}
First of all  recall that, by definition, $P ( g^m ,h^m  ) $ is a linear combination of complete contractions  $C^ l ( g^m , h^m)$ each  of weight $-m$. Observing that  any contraction of $g^{-1} \otimes h$ is of weight $-1$, this implies that in \eqref{eqn_contract} we have $s=m$, that is $C  ^ l(g^m, h^m)$ is a complete contraction of the form 
\begin{align} \label{eqn_contract1}
\contr(g^{i_1 j_1}\otimes\ldots \otimes  g^{i_m j_m} \otimes h_{ i _{m+1}  j_{m+1}}\otimes \dots \otimes h _{ i _{2m} j_{2m}}) \quad .
\end{align}
Recalling \eqref{eq:Splith}, that is the orthogonal splitting  $h_{ij}=H g_{ij}+h _{ij}^\circ$ , we can rewrite such a complete contraction as
\begin{align} \label{eqn_contract1}
\contr(g^{i_1 j_1}\otimes\ldots \otimes  g^{i_m j_m} \otimes  H g_{ i _{m+1}  j_{m+1}}\otimes \dots \otimes H g_{ i_{m+r} j_{m+r}} \otimes  h_{ i _{m+r+1}  j_{m+r+1}}^\circ  \otimes \dots \otimes  h _{ i _{2m} j_{2m}}^\circ )  \quad .
\end{align}
Our goal is to prove that $r=0$, that is there are no $H$ factors, so $P(g^m,h^m)=P(g^m,\circo h^m)$ is expressed purely as complete contractions of traceless fundamental forms, which are  \emph{pointwise} conformal invariants once multiplied by $d\mu_g$ thanks to \eqref{eq:CChH}.\\

To that aim  consider an arbitrary compact $m$-dimensional immersed submanifold $f(\M^m)$ of an arbitrary  Riemannian $n$-manifold $(\bar{\M}^n, \bar{g}^n)$, and an arbitrary conformal rescaling $\hat{\bar{g}}^n=e^{2 \phi(x)} \bar{g}^n$ of the ambient metric by a smooth function $\phi\in C^\infty(\bar{\M})$. All the hatted geometric tensors $\hat{g}, \hat{h}, \hat{h}^\circ, \hat H$ denote the corresponding tensors computed with respect to the deformed metric $\hat{\bar{g}}$. In Section \ref{SS:Ig} we defined the operator
$$I_{(g,h)}(\phi):= e^{m \phi} P(\hat{g}, \hat{h})- P(g,h)\quad ,$$
and we observed that  the conformal invariance of the integrated quantities implies (see \eqref{eq:IntIInv}) 
\begin{equation}\label{eq:IntI=0}
\int_{\M} I_{(g,h)}(\phi) \, d \mu_{g}=0 \quad.
\end{equation}
From the formulas \eqref{eq:CChH} of the change of $h^\circ$ and $H$ under a conformal deformation of metric, it is clear that $I_{(g,h)}(\phi)$ does not depend directly on $\phi$ but just on $\pi_N(\bar{\nabla} \phi)$, the projection of $\bar{\nabla} \phi$ onto the normal space of $f(\M)$. More precisely  $I_{(g,h)}(\phi)$ is  polynomial in the components of $\pi_N(\bar{\nabla} \phi)$ exactly of  the same degree $0\leq r\leq m$ as $P(g, h^\circ, H)$ seen as a polynomial in $H$.  

By considering $t\phi$ for $t \in \R$, we get that 
$$I_{g, h}(t \phi)= \sum_{k=1}^r a_k C^k(g, h^\circ, H, \pi_N(\bar{\nabla} \phi)) \; t^k\quad ,$$
where $C^k(g, h^\circ, H, \pi_N(\bar{\nabla} \phi))$ is an homogeneous polynomial of degree $k$ in the components of  $\pi_N(\bar{\nabla} \phi)$. Recalling now \eqref{eq:IntI=0}, we obtain that  $ \int_{\M} I_{(g,h)}(t\phi) \, d \mu_{g} $ vanishes identically as a polynomial in $t$, so
\begin{equation}\label{eq:IntIk}
0=\frac{d^k}{dt^k}\mid_{{t=0}}  \int_{\M} I_{(g,h)}(t\phi) \, d \mu_{g}  = k!  \int_{\M}  a_k C^k(g, h^\circ, H, \pi_N(\bar{\nabla} \phi))   \, d \mu_{g},\quad \forall k=0,\ldots,r.
\end{equation}
Pick an arbitrary point $x \in \M$; by choosing local coordinates in $\bar{\M}^n$ adapted to $f(\M)$ at $f(x)$, it is easy to see that for any given functions $\psi^i,\ldots, \psi^{n-m} \in C^{\infty}_c(\M)$ supported in such a coordinate neighborhood  of $x$, there exists $\phi \in C^\infty(\bar{\M})$ such that $$\psi^i= (\bar{\nabla} \phi)^{m+i} \circ f \quad, $$
 where of course thanks to this choice of coordinates we have 
 $$\pi_N(\bar{\nabla} \phi)=\Big((\bar{\nabla} \phi)^{m+1}, \ldots, (\bar{\nabla} \phi)^{n}\Big)\quad. $$
 By plugging such arbitrary $C^\infty_c(\M)$ functions   $\psi^i,\ldots, \psi^{n-m}$ in place of  $\pi_N(\bar{\nabla} \phi)$ in \eqref{eq:IntIk}, we obtain that not only the integrals  but the integrands themselves must vanish, that is $a_k C^k(g, h^\circ, H, \pi_N(\bar{\nabla} \phi))\equiv 0$ on $\M$. It follows that $I_{g,h}(\phi)\equiv 0$ or, in other words, the degree of  $I_{g, h}(t \phi)$ as a polynomial in $t$ is $0$. By the above discussion we have that $r=0$, which was our thesis. \end{proof}
 
We pass now to  consider the more general geometric scalar  quantity $ P (g^m, R^m, h^m)$, for $m\in \N$ \emph{even}, of the form 
\begin{equation}\label{eq:PP1P2}
P (g^m, R^m, h^m)=P_1(g^m, h^m)+ P_2(g^m,R^m)\quad,
\end{equation}
where
\begin{align} \label{eq:P1P2}
P_1(g^m,h^m)=\sum_{ l \in L } a _l C^ l ( g^m , h^m) \quad \text{and} \quad  P_2(g^m,R^m)=\sum_{ s \in S } b _s C^ s ( g^m , R^m) \quad,
\end{align}
 where each $C  ^ l( g^m , h^m)$ is a complete contraction 
\begin{align} \label{eq:Cl}
\contr(g^{i_1 j_1}\otimes\ldots \otimes  g^{i_s j_s} \otimes h_{ i _{s+1}  j_{s+1}}\otimes \dots \otimes h _{ i _{2s} j_{2s}}) \quad 
\end{align}
and each  $C^ s ( g^m , R^m)$ is a complete contraction
\begin{align} \label{eq:Cs}
\contr(g^{i_1 j_1}\otimes\ldots \otimes  g^{i_{2r} j_{2r}} \otimes R_{ i _{2r+1}  j_{2r+1} k_{2r+1} l_{2r+1} }\otimes \dots \otimes R_{ i _{3r}  j_{3r} k_{3r} l_{3r} }) \quad. 
\end{align}
In other words  we consider complete contractions as defined in  \eqref{eq:ContrGen} which  split in two parts: one depending just on the second fundamental form $h^m$ and the other one just on the intrinsic curvature $R^m$, 
for immersed $m$-submanifolds $(f(\M^m), g^m)$ in Riemannian $n$-manifolds $(\bar{\M}^n,\bar{g}^n)$. 
As usual, the goal is to understand the structure of geometric scalar quantities  \eqref{eq:PP1P2} giving rise to global conformal invariants for submanifolds, in the sense of Definition \ref{def:GCI}.
This is exactly  the content of the next result.

\begin{theorem}\label{thm:P=P1+P2}
Let $m \in \N$ be  \emph{even} and let  $P( g ^ m , R^m, h ^m )=P_1(g^m, h^m)+ P_2(g^m,R^m)$ be a geometric scalar quantity as above. Assume that  $ \int _{\M}P( g ^ m, R^m, h ^ m) d \mu_{g^m}$ is a global conformal invariant, in the sense of Definition \ref{def:GCI}. 

Then both $ \int _{\M}P_1( g ^ m, h ^ m) d \mu_{g^m}$ and $ \int _{\M}P_2( g ^ m, R ^ m) d \mu_{g^m}$ are global conformal invariants. It follows that

 i)  There exists a  \emph{pointwise} conformal invariant $W_1 ( g ^ m, \circo h^m )$ of weight $ -m$  depending only on the traceless second fundamental form $\circo h$ contracted with the induced metric $g^m$ so that 
\begin{align*}
P_1( g ^ m , h ^ m ) = W_1 ( g ^ m,  \circo h ^m )\quad ;
\end{align*} 
or, in other words, for every $l \in L$ one has that  $ C ^l$ in \eqref{eq:Cl} is a complete contraction of  weight $-m$ of  the form 
\begin{align} \label{eqn_contract2}
\contr(g^{i_1 j_1}\otimes\ldots \otimes  g^{i_m j_m} \otimes \circo h_{ i _{m+1}  j_{m+1}}\otimes \dots \otimes \circo h _{ i _{2m} j_{2m}}) \quad . 
\end{align}

ii) Called  $ \Pfaff(R^m)$  the Pfaffian of the intrinsic Riemann tensor $R^m$ and $W^m$ the Weyl tensor of $g^m$,  $P_2(g^m, R^m)$ is of the form
\begin{align*}
P_2(g^m, R^m)= \tilde{P}_2(g^m, W^m)+ c \Pfaff(R^m) \;, \quad \text{for some } c\in \R \quad, 
\end{align*}
where $\tilde{P}_2(g^m, W^m)$ is a \emph{pointwise} conformal invariant of weight $-m$ expressed as a  linear combination of complete contractions of the form
$$\contr(g^{i_1 j_1}\otimes\ldots \otimes  g^{i_{m} j_{m}} \otimes W_{ i _{m+1}  j_{m+1} k_{m+1} l_{m+1} }\otimes \dots \otimes W_{ i _{\frac{3m}{2}}  j_{\frac{3m}{2}} k_{\frac{3m}{2}} l_{\frac{3m}{2}} }) \quad.  $$
\end{theorem}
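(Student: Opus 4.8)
The plan is to reduce the theorem to two ingredients that are already available: Theorem \ref{thm:P(g,h)} for the extrinsic part $P_1$, and the resolution of the Deser--Schwimmer conjecture by Alexakis \cite{AlexI} for the intrinsic part $P_2$. The only genuinely new step is to \emph{decouple} the two pieces, i.e. to prove that $\int_\M P_1 \,d\mu_g$ and $\int_\M P_2\, d\mu_g$ are \emph{separately} global conformal invariants; once this is done, part (i) is an immediate application of Theorem \ref{thm:P(g,h)} to $P_1$, and part (ii) follows by feeding $P_2$ into Alexakis' theorem.

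To decouple, I would follow the scheme of Section \ref{SS:Ig}: write $I_{(g,R,h)}(\phi)= e^{m\phi}P(\hat g,\hat R,\hat h)-P(g,R,h)$ and split it, according to $P=P_1+P_2$, as $I = I_1 + I_2$ with $I_1(\phi)=e^{m\phi}P_1(\hat g,\hat h)-P_1(g,h)$ and $I_2(\phi)=e^{m\phi}P_2(\hat g,\hat R)-P_2(g,R)$. The two summands see the conformal factor $\phi$ through complementary parts of its jet along $\M$. Indeed $P_2$ is built only from the induced metric and its intrinsic curvature, so $I_2(\phi)$ depends only on the restriction $\phi|_\M$. On the other hand, exactly as in the proof of Theorem \ref{thm:P(g,h)}, each complete contraction in $P_1$ has weight $-m$ and hence consists of $m$ inverse metrics contracted against $m$ copies of $h$; by \eqref{eq:CChH} the dependence of $\hat h$ on $\phi$ enters only through $\pi_N(\bar\nabla\phi)$, and setting $V:=\pi_N(\bar\nabla\phi)=0$ gives $\hat h = e^\phi h$ and $\hat g^{-1}=e^{-2\phi}g^{-1}$, so each such contraction scales by $e^{-m\phi}$. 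Therefore $I_1(\phi)|_{V=0}\equiv 0$: the extrinsic operator carries no $V$-independent term, while the intrinsic operator $I_2$ is independent of $V$.

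The decoupling is then immediate from the freedom in choosing $\phi$. Given an arbitrary $\psi\in C^\infty(\M)$, extend it to $\phi\in C^\infty(\bar\M)$ which is constant along the fibres of the normal exponential map near $f(\M)$; then $\phi|_\M=\psi$ while $\pi_N(\bar\nabla\phi)=0$ along $f(\M)$. For such $\phi$ one has $I_1(\phi)\equiv0$, so \eqref{eq:IntIInv} yields $\int_\M I_2(\phi)\,d\mu_g=0$. Since $I_2$ depends only on $\phi|_\M=\psi$ and $\psi$ was arbitrary, this says precisely that $\int_\M P_2(g,R)\,d\mu_g$ is a global conformal invariant; consequently $\int_\M P_1 \,d\mu_g= \int_\M P\,d\mu_g - \int_\M P_2\,d\mu_g$ is one as well. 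Part (i) now follows verbatim from Theorem \ref{thm:P(g,h)} applied to $P_1$, giving $P_1=W_1(g^m,\circo h^m)$ of the form \eqref{eqn_contract2}.

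For part (ii) I would observe that $\int_\M P_2(g^m,R^m)\,d\mu_g$ is an intrinsic global conformal invariant in the sense of Deser--Schwimmer: any Riemannian $(\M^m,g^m)$ can be realised as an isometrically immersed submanifold of some $\bar\M^n$ (for instance by the Nash theorem), and any intrinsic conformal factor $\psi\in C^\infty(\M)$ is the restriction of some ambient $\phi\in C^\infty(\bar\M)$; thus the invariance established above becomes invariance of $\int_\M P_2\,d\mu_g$ under arbitrary intrinsic conformal changes $g^m\mapsto e^{2\psi}g^m$. Since $P_2$ is a polynomial in the intrinsic curvature $R^m$ of even weight $-m$ (here $m$ even is used), Alexakis' theorem \cite[Theorem 1]{AlexI} applies and yields the claimed decomposition $P_2=\tilde P_2(g^m,W^m)+c\,\Pfaff(R^m)$, with $\tilde P_2$ a pointwise conformal invariant expressed through Weyl contractions (the absence of covariant derivatives of $R$ in $P_2$ being what prevents any divergence term from surviving). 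The main obstacle is precisely the decoupling step: it is what isolates the intrinsic invariant so that the deep input of Alexakis can be invoked, and it relies crucially on the structural assumption that $P$ has no mixed contractions between $R$ and $h$, together with the weight-homogeneity that forces $I_1$ to vanish once the normal part of $\bar\nabla\phi$ is switched off.
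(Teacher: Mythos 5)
Your proposal is correct, and it shares the paper's overall architecture: decouple the two pieces, then quote Theorem \ref{thm:P(g,h)} for $P_1$ and Alexakis' theorem for $P_2$. The difference is in how the decoupling is executed, and your route is precisely dual to the paper's. The paper chooses $\phi$ so that its \emph{tangential} jet vanishes along $f(\M)$ while $\pi_N(\bar{\nabla}\phi)$ is arbitrary, then runs the polarization-in-$t$ trick and a vanishing-of-integrands argument to conclude $I^1_{g,h}(\phi)\equiv 0$; thus $\int_{\M} P_1\, d\mu_g$ is shown invariant first, and $\int_{\M} P_2\, d\mu_g$ follows by subtraction. You instead switch off the \emph{normal} gradient and note, by pure weight-homogeneity (when $\pi_N(\bar{\nabla}\phi)=0$ one has $\hat h = e^{\phi}h$ and $\hat g^{-1}=e^{-2\phi}g^{-1}$, so every weight $-m$ contraction in $P_1$ scales by $e^{-m\phi}$), that $I_1(\phi)\equiv 0$ for such $\phi$; since $I_2$ sees only $\phi|_{\M}$, this gives invariance of $\int_{\M} P_2 \, d\mu_g$ first, with $P_1$ following by subtraction. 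Your version is more economical at this step: no polarization and no localization argument are needed for the decoupling (they stay hidden inside Theorem \ref{thm:P(g,h)}, which you invoke afterwards). Two minor points. First, your extension of $\psi$ to a function constant along normal fibres requires a tubular neighbourhood, hence an embedded (or locally embedded) test submanifold; this is harmless, since the hypothesis holds for all submanifolds and embedded ones suffice as test configurations, and it is the same technicality the paper itself glosses over when prescribing normal derivatives of $\phi$ along an immersed $f(\M)$. Second, your explicit appeal to the Nash embedding theorem, converting invariance-for-submanifolds into intrinsic conformal invariance on arbitrary closed Riemannian manifolds before citing \cite[Theorem 1]{AlexI}, makes precise a passage the paper leaves implicit ("$P_2$ enters into the framework of Alexakis"), and is a genuine improvement in completeness; your parenthetical dismissal of the divergence term in Alexakis' decomposition is at the same level of detail as the paper's own treatment.
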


\begin{remark}
It is well known that 
\begin{itemize}
\item The Weyl tensor $W_{ijkl}(g^m)$  is a pointwise scalar conformal invariant of weight $2$, that is it satisfies $W_{ijkl}(e^{2\phi(x)} g^m(x))= e^{2 \phi(x) } W(g^m)(x)$ for every $\phi\in C^\infty(\M)$ and every $x\in \M^m$. It follows that any complete contraction of the tensor $g^{-1}\otimes g^{-1} \otimes W$ is a pointwise scalar conformal invariant of weight $-2$.
\item The Pfaffian  $\Pfaff(R_{ijkl})$ of the intrinsic curvature $R_{ijkl}=R^m$ integrated over the manifold gives rise to a \emph{topological} invariant:
$$\int_{\M^m} \Pfaff(R_{ijkl}) \, d\mu_g= \frac{2^m \pi^{m/2} (\frac{m}{2}-1)!}{2(m-1)!} \chi(\M^m)\quad, $$
where $\chi(\M^m)$ is the Euler Characteristic of $\M^m$.
\end{itemize}
Therefore, recalling the conformal invariance of the traceless second fundamental form \eqref{eq:CChH}, any linear combination of complete contractions
$$P(g^m,h^m,R^m)=W_1(g^m, \circo h^m)+ \tilde{P}_2(g^m, W^m)+ c \Pfaff(R^m)$$
as in the thesis of Theorem \ref{thm:P=P1+P2} gives rise to a ``trivial'' global conformal invariant. Thereom \ref{thm:P=P1+P2} states  that, under the assumption that $P$ splits into the sum of an intrinsic part and of an extrinsic part depending just on the second fundamental form, this is actually the only possibility.
 \end{remark}
 
 \begin{proof}
 Since by assumption $P( g ^ m , R^m, h ^m )=P_1(g^m, h^m)+ P_2(g^m,R^m)$ gives rise to a global conformal invariant, if we show that $\int_{\M} P_1(g^m, h^m) \, d \mu_g$ is a global conformal invariant, the same will be true for   $\int_{\M} P_2(g^m, R^m) \, d \mu_g$.    In order to prove that, consider  an arbitrary compact $m$-dimensional immersed submanifold $f(\M^m)$ of an arbitrary  Riemannian $n$-manifold $(\bar{\M}^n, \bar{g}^n)$, and an arbitrary conformal rescaling $\hat{\bar{g}}^n=e^{2 \phi(x)} \bar{g}^n$ of the ambient metric by a smooth function $\phi\in C^\infty(\bar{\M})$. All the hatted geometric tensors $\hat{g}, \hat{h}, \hat{h}^\circ, \hat H, \hat{R}$ denote the corresponding tensors computed with respect to the rescaled metric $\hat{\bar{g}}$. Analogously to  Section \ref{SS:Ig}, define the operators
 \begin{eqnarray}
 I^1_{g,h}(\phi)&:=& e^{m \phi} P_1(\hat{g}, \hat{h})- P_1(g,h)\quad , \label{eq:I1} \\
 I^2_{g,R}(\phi)&:=& e^{m \phi} P_2(\hat{g}, \hat{R})- P_2(g,R)\quad , \label{eq:I2}   \\
 I_{g,R,h}(\phi)&:=& I^1_{g,h}(\phi)+I^2_{g,R}(\phi) \quad . \label{eq:I=I1+I2}
 \end{eqnarray}
As already observed in the proof of Theorem \ref{thm:P(g,h)},   $I^1_{g,h}(\phi)$ does not depend directly on $\phi$ but  only on $\pi_N(\bar{\nabla} \phi)$, the projection of $\bar{\nabla} \phi$ onto the normal space of $f(\M)$. More precisely  $I^1_{g,h}(\phi)$ is  polynomial in the components of $\pi_N(\bar{\nabla} \phi)$ exactly of  the same degree $0\leq r\leq m$ as $P_1(g, h^\circ, H)$ seen as a polynomial in $H$. 
\\On the other hand, recalling the formulas \eqref{eq:Rhatg} of the change of the intrinsic curvature $R_{ijkl}$ of $(\M^m, g^m)$ under conformal deformation of the metric, it is clear that $I^2_{g,R}$ does not depend directly on $\phi$ but only on $\nabla_\M \phi$, the projection of $\bar{\nabla} \phi$ onto the tangent  space of $f(\M)$, and on $\nabla^2_\M \phi$, the covariant Hessian of $\phi|_{\M}$.  More precisely  $I^2_{g,R}(\phi)$ is  polynomial in the components of $\nabla_\M \phi, \nabla^2 _\M \phi$.
 
By considering $t\phi$ for $t \in \R$, we get that 
$$I^1_{g, h}(t \phi)= \sum_{k=1}^r a_k C^k(g, h^\circ, H, \pi_N(\bar{\nabla} \phi)) \; t^k\quad  ,\quad I^2_{g,R}(t \phi)= \sum_{l=1}^s b_l C^l(g, R,  \nabla_\M \phi, \nabla^2 _{\M} \phi) \; t^k\quad $$
where $C^k(g, h^\circ, H, \pi_N(\bar{\nabla} \phi))$ (respectively $C^l(g, R,  \nabla_\M \phi, \nabla^2 _{\M} \phi)$) is an homogeneous polynomial of degree $k$ in the components of  $\pi_N(\bar{\nabla} \phi)$ (respectively of $\nabla_\M \phi, \nabla^2 _\M \phi$). Recalling now \eqref{eq:IntI=0}, we obtain that  $ \int_{\M} I_{(g,h)}(t\phi) \, d \mu_{g} $ vanishes identically as a polynomial in $t$, so
\begin{eqnarray}
0&=&\frac{d^k}{dt^k}\bigg|_{{t=0}}  \int_{\M} I_{(g,h)}(t\phi) \, d \mu_{g}  \nonumber \\
   &=& k!  \int_{\M}  a_k C^k(g, h^\circ, H, \pi_N(\bar{\nabla} \phi)) + b_k C^k(g, R,  \nabla_\M \phi, \nabla^2 _{\M} \phi)    \, d \mu_{g},\quad \forall k\in \N. \label{eq:akbk}
\end{eqnarray}
Pick an arbitrary point $x \in \M$; by choosing local coordinates in $\bar{\M}^n$ adapted to $f(\M)$ at $f(x)$, it is easy to see that for any given functions $\psi^i,\ldots, \psi^{n-m} \in C^{\infty}_c(\M)$ supported in such coordinate neighborhood  of $x$, there exists $\phi \in C^\infty(\bar{\M})$ such that 
$$\psi^i= (\bar{\nabla} \phi)^{m+i} \circ f \quad \text{and}  \quad \nabla_{\M} (\phi\circ f)=0 \quad,  $$
 where of course  thanks to the this choice of coordinates we have 
 $$\pi_N(\bar{\nabla} \phi)=\Big((\bar{\nabla} \phi)^{m+1}, \ldots, (\bar{\nabla} \phi)^{n}\Big)\quad. $$
 With this choice of $\phi$, the second summand in the integral of \eqref{eq:akbk} disappears, and thanks to the arbitrariness of the  $C^\infty_c(\M)$ functions   $\psi^i,\ldots, \psi^{n-m}$ in place of  $\pi_N(\bar{\nabla} \phi)$ in \eqref{eq:akbk}, we obtain that not only the integrals  but the integrands themselves must vanish, that is $a_k C^k(g, h^\circ, H, \pi_N(\bar{\nabla} \phi))\equiv 0$ on $\M$, so $I^1_{g,h}(\phi)\equiv 0$.  
 In particular $\int_{\M} P_1(g^m,h^m) d \mu_g$ is a global conformal invariant for $m$-dimensional submanifolds which implies that also $\int_{\M} P_2(g^m,R^m) d \mu_g$ is a global conformal invariant, since by assumption $\int_{\M}  [P_1(g^m,h^m) +  P_2(g^m,R^m) ] d \mu_g $ is that as well.
 
Claim i) follows then directly from Theorem \ref{thm:P(g,h)}.  To get claim ii) observe that by construction  $P_2(g^m, R^m)$ depends just on the \emph{intrinsic} Riemannian structure $(\M,g)$ and not on the immersion $f$ into an ambient manifold $\bar{\M}$. Therefore we proved that $\int_{\M} P_2(g^m, R^m) d \mu_g$ is an intrinsic Riemannian conformally invariant quantity, which enters into the framework of the papers of Alexakis \cite{AlexI}, \cite{AlexII}. More precisely, by applying \cite[Theorem 1]{AlexI}, we obtain claim ii) and the proof is complete.   
  \end{proof}

\section{Generalized Willmore Energies in Higher Dimensions}\label{sec:genWill}
In this final section,  we will introduce a higher dimensional analogue of the Willmore energy (actually we will construct a two-parameters family of such functionals). This new energy is conformally invariant and only attains its strictly positive lower bound at a round sphere. Let us start with some preliminaries about the Willmore functional. 

Given an immersion $f:\M^2 \hookrightarrow \R^3$ of a closed surface $\M^2$ into the Euclidean space $\R^3$, the Willmore functional $\mc W(f)$ is defined by
$$\mc W (f):=\int_{\M} |H|^2 d \mu_g\quad. $$
A natural way to introduce such functional is via conformal invariance: by the Gauss-Bonnet Theorem,  $\int_{\M} K d \mu_g$ is a topological hence a fortiori conformal invariant quantity; moreover, by the formula \eqref{eq:CChH}, $|h^\circ|^2 d \mu_g$ is a  \emph{pointwise} conformal invariant. It follows that \emph{any} functional 
$$ W_{\alpha}(f):=\int_{\M} \left( K + \alpha  |h^\circ|^2  \right) d \mu_g  \text{ is a global conformally invariant 	quantity}$$
in the sense of Definition  \ref{def:GCI}.
For immersions into $\R^3$ we have that    $K+\frac{1}{2} |h^\circ|^2= |H|^2  $, so 
$$\mc W(f) =  W_{1/2}(f)   \text{ is invariant under conformal trasformations of $\R^3$,} $$
that is under  Moebius transformations centred out of $f(\M)$. 
Observing that $K=\detg(h)$ and that $\frac{1}{2}|h^\circ|^2=-\detg(h^\circ)$, we can also write
\begin{equation}\label{eq:Wdeth}
\mc W(f)=\int_{\M} \left( \detg(h) - \detg(h^\circ) \right) \, d \mu_g \quad.
\end{equation}
Of course the functional $\mc W$ is non-negative and vanishes exactly on minimal surfaces, which are therefore points of strict global minimum; the critical points of $\mc W$ can therefore be seen in a natural way as ``generalized conformal minimal surfaces''; this was indeed the starting point  in the '20ies of the theory of Willmore surfaces by Blaschke \cite{Blaschke},  who was looking for a natural conformally invariant class of immersions which included minimal surfaces. Let us mention that such a functional was later rediscovered in the 1960's by Willmore \cite{Will} who proved that round 2-spheres
are the points of strict global minimum for $\mc W$.

Motivated by this celebrated  two dimensional theory, our goal is to investigate the case of 4-d hypersurfaces in $\R^5$, that is $f:(\M^4,g)\hookrightarrow (\R^5, \delta_{\mu\nu})$ isometric immersion. \\\\We address the following natural questions:

1) Is it possible to ``perturb'' the Pfaffian of the Riemann tensor of the induced metric $g$  on $\M^4$ in order to get a conformally invariant functional vanishing on minimal surfaces? 

2) Is that functional positive definite? If not, how can we preserve the conformal invariance and make it positive definite?

3) Are round spheres of $\R^5$ strict global minimum of this conformally invariant functional?
\medskip

As in the 2-d case, the starting point is the Gauss-Bonnet Theorem. For $4$-d smooth closed (that is compact without boundary) immersed hypersurfaces $f:(\M^4,g) \hookrightarrow (\R^5, \delta_{\mu \nu})$, the Gauss-Bonnet Theorem states that    
\begin{equation}\label{eq:GenGB}
\int_{\M^4} \detg(h) \, d \mu_g=\frac{4 \pi^2 }{3} \; \chi(\M) = \frac{8 \pi^2 }{3} \deg(\gamma) \quad,
\end{equation}    
where $\detg(h):=\det(g^{-1} h)$, $\chi(\M)$ is the Euler Characteristic of $\M$ and  $\gamma: \M^4 \to S^4\subset \R^5$ is the Gauss map associated to  the immersion $f$.
By applying the classical Newton's identities for symmetric polynomials to the symmetric polynomials of the principal curvatures of the immersion $f$, it is an easy exercise to write   $\det(g^{-1} h)$ as
\begin{equation}\label{eq:detgh}
\detg(h)=\frac{32}{3} H^4- 4 H^2 |h|^2+\frac{4}{3} H \Tr_g(h^3)+\frac{1}{8}|h|^4-\frac{1}{4} \Tr_g(h^4) \quad,
\end{equation}
where  $\Tr_g(h^p):=\Tr[(g^{-1} h)^p]$. By using the orthogonal decomposition $h=\circo h+ H g$, see \eqref{eq:Splith},  iteratively we get that
\begin{eqnarray}
|h|^2&=&  \Tr_g(h^2) = |h^\circ|^2 + 4 H^2 \nonumber \\
\Tr_{g}(h^3)&=& \Tr_{g}(\circo h^3)+ 3 H |\circo h|^2 + 4 H^3 \nonumber\\
\Tr_{g}(h^4)&=& \Tr_{g}(\circo h^4)+ 4 H \Tr_{g}(\circo h^3) + 6 H^2 |\circo h|^2 + 4 H^4 \quad, \nonumber
 \end{eqnarray}
which, plugged into \eqref{eq:detgh}, give
\begin{equation}\label{eq:detg-1h}
\detg(h)= H^4- \frac{1}{2} H^2 |\circo h|^2+\frac{1}{3} H \Tr_g(\circo h ^3)+\frac{1}{8}|\circo h|^4-\frac{1}{4} \Tr_g(\circo h^4) \quad.
\end{equation}
Inspired by the 2-d case, in particular by formula \eqref{eq:Wdeth}, we wish to make appear a term $\detg(\circo h)$, this is possible thanks to the following lemma.

\begin{lemma}\label{lem:Trdet}
For any  $4$-d hypersurface immersed into $\R^5$, we have that
\begin{equation}\label{eq:Trdet}
\Tr_g(\circo h^4)-\frac{1}{2} |\circo h|^4 = -4 \detg(\circo h) \quad.
\end{equation} 
\end{lemma}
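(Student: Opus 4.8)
The plan is to reduce the identity to a statement purely about the eigenvalues of the traceless shape operator and then invoke Newton's identities. Since $\circo h$ is a $g$-symmetric bilinear form on the $4$-dimensional tangent space, the associated endomorphism $S:=g^{-1}\circo h$ (with components $S^i_j=g^{ik}\circo h_{kj}$) is self-adjoint with respect to $g$, hence diagonalizable with four real eigenvalues $\lambda_1,\lambda_2,\lambda_3,\lambda_4$. All three quantities appearing in \eqref{eq:Trdet} are spectral invariants of $S$: writing $p_k:=\sum_{i=1}^4\lambda_i^k$ for the power sums and $e_k$ for the elementary symmetric polynomials in the $\lambda_i$, one has $\Tr_g(\circo h^4)=\Tr(S^4)=p_4$, $|\circo h|^2=\Tr(S^2)=p_2$ so that $|\circo h|^4=p_2^2$, and $\detg(\circo h)=\det S=e_4$.

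The key structural input is that $\circo h$ is \emph{trace free}, which translates into $p_1=e_1=\Tr_g(\circo h)=0$. With this in hand I would apply Newton's identities for four variables,
\begin{align*}
p_2&=e_1p_1-2e_2,\\
p_4&=e_1p_3-e_2p_2+e_3p_1-4e_4.
\end{align*}
Setting $e_1=p_1=0$ in the first identity gives $e_2=-\tfrac12 p_2$, and setting $e_1=p_1=0$ in the second gives $p_4=-e_2p_2-4e_4$. Substituting $e_2=-\tfrac12 p_2$ then yields $p_4=\tfrac12 p_2^2-4e_4$, that is $p_4-\tfrac12 p_2^2=-4e_4$, which is precisely \eqref{eq:Trdet} after re-expressing $p_4$, $p_2^2$, $e_4$ in the original notation.

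There is no genuine obstacle here: the content is entirely a symmetric-function computation, and the only points requiring a little care are the justification that $S$ is diagonalizable (which follows from its $g$-self-adjointness) and the correct matching of the geometric invariants $\Tr_g(\circo h^4)$, $|\circo h|^4$, $\detg(\circo h)$ with the symmetric functions $p_4$, $p_2^2$, $e_4$. One could alternatively bypass diagonalization and establish the identity directly as the universal polynomial relation among $\Tr(S^2)$, $\Tr(S^4)$ and $\det S$ valid for any trace-free $4\times 4$ matrix, but the eigenvalue route via Newton's identities is the most transparent.
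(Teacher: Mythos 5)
Your proof is correct, and it differs from the paper's in the key computational step. Both arguments start identically: diagonalize the $g$-self-adjoint endomorphism $g^{-1}\circo h$ and reduce \eqref{eq:Trdet} to an identity among its four real eigenvalues, with tracelessness as the essential input. The paper then proceeds by direct expansion: it eliminates $\lambda_4=-\lambda_1-\lambda_2-\lambda_3$, expands $\Tr_g(\circo h^4)$ and $|\circo h|^4$ as explicit quartics in $\lambda_1,\lambda_2,\lambda_3$, and observes that the difference collapses to $4\lambda_1\lambda_2\lambda_3(\lambda_1+\lambda_2+\lambda_3)=-4\detg(\circo h)$. You instead keep all four eigenvalues on an equal footing and invoke Newton's identities, using $p_1=e_1=0$ to get $e_2=-\tfrac12 p_2$ and $p_4=-e_2p_2-4e_4=\tfrac12 p_2^2-4e_4$, which is exactly the claim; your quoted identities $p_2=e_1p_1-2e_2$ and $p_4=e_1p_3-e_2p_2+e_3p_1-4e_4$ are the correct ones for four variables, and the matching of $\Tr_g(\circo h^4)$, $|\circo h|^4$, $\detg(\circo h)$ with $p_4$, $p_2^2$, $e_4$ is right. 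Your route buys cleanliness (no asymmetric elimination of $\lambda_4$, no coefficient bookkeeping) and generality: the same two lines give the analogous relation for trace-free operators in any dimension, which is the kind of structure lurking behind the paper's higher-dimensional Theorem \ref{thm:4}. It is also stylistically consistent with the paper itself, which invokes Newton's identities just before the lemma to derive \eqref{eq:detgh}. What the paper's expansion buys in exchange is complete self-containedness: nothing beyond multinomial expansion is used. Both are complete proofs.
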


\begin{proof}
Since $\circo h$ is bilinear symmetric with respect to $g$ it can be diagonalized and its eigenvalues $\{\lambda_i\}_{i=1,\ldots,4} \subset \R$ satisfy $\lambda_4= -\lambda_1-\lambda_2-\lambda_3$, since $\Tr_g(\circo h)=0$. We then have that 
\begin{eqnarray}
\Tr_g(\circo h^4)& = &\lambda_1^4+ \lambda_2^4 + \lambda_3^4 + (\lambda_1+\lambda_2+\lambda_3)^4 \nonumber\\
                          &=& 2 \sum_{i=1}^3 \lambda_i^4 + 4 \sum_{1\leq i\neq j \leq 3} \lambda_i^3 \lambda_j + 6 \sum_{1\leq i\neq j \leq 3} \lambda_i^2 \lambda_j^2 + 12 \sum_{1\leq i\neq j \neq k \leq 3} \lambda_i^2 \lambda_j \lambda_k \quad.\label{eq:Trh4} 
\end{eqnarray}
On the other hand,
\begin{eqnarray}
|\circo h|^4 & = & \left[\lambda_1^2+ \lambda_2^2 + \lambda_3^2 + (\lambda_1+\lambda_2+\lambda_3)^2\right]^2 \nonumber\\
                          &=& 4 \sum_{i=1}^3 \lambda_i^4 + 8 \sum_{1\leq i\neq j \leq 3} \lambda_i^3 \lambda_j + 12 \sum_{1\leq i\neq j \leq 3} \lambda_i^2 \lambda_j^2 + 16 \sum_{1\leq i\neq j \neq k \leq 3} \lambda_i^2 \lambda_j \lambda_k \quad.\label{eq:h4} 
\end{eqnarray}
Combining  \eqref{eq:Trh4}  and \eqref{eq:h4}  and recalling that  $\lambda_4= -\lambda_1-\lambda_2-\lambda_3$ we can conclude: 
$$
\Tr_g(\circo h^4)- \frac{1}{2}|\circo h|^4 = 4 \sum_{1\leq i\neq j \neq k \leq 3} \lambda_i^2 \lambda_j \lambda_k  = 4 \lambda_1 \lambda_2 \lambda_3 (\lambda_1+ \lambda_2+\lambda_3)= -4 \detg(\circo h) \quad.
$$
\end{proof}
Thanks to the identity \eqref{eq:Trdet}, we can rewrite \eqref{eq:detg-1h} as
\begin{equation}\label{eq:dethH}
\detg(h)- \detg(\circo h)= H^4- \frac{1}{2} H^2 |\circo h|^2+\frac{1}{3} H \Tr_g(\circo h ^3)\quad.
\end{equation}
 
Since for even dimensional hypersurfaces in the Euclidean space it is well know that the Pfaffian of the intrinsic Riemann tensor is a multiple of $\detg(h)$, we  have just answered to question 1):

\begin{proposition}\label{prop:1}
Given an isometric  immersion $f:(\M^4,g)\hookrightarrow (\R^5, \delta_{\mu\nu})$ of a closed $4$-manifold  $(\M^4,g)$, define
$$\mc P (f):= \int_{\M} \left[ \detg(h)- \detg(\circo h) \right] d \mu_g \quad. $$
Then, the  functional $\mc P$ is invariant under conformal transformations of $\R^5$ centered off of $f(\M^4)$. 
\\Moreover $\mc P $ vanishes identically on minimal hypersurfaces, and the minimal hypersurfaces of $\R^5$ satisfying $\Tr_g(\circo h^3)\equiv 0$ are critical points  for $\mc P$.
\end{proposition}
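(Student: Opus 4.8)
The plan is to establish the three assertions in turn, leaning on the structural results already at hand. First, for the conformal invariance of $\mc P$, I would argue that the integrand $\detg(h)-\detg(\circo h)$ is, up to a topological term, a pointwise conformal invariant. Indeed, by the generalized Gauss-Bonnet formula \eqref{eq:GenGB}, $\int_{\M} \detg(h)\, d\mu_g = \frac{4\pi^2}{3}\chi(\M)$ is a topological (hence conformal) invariant. On the other hand, $\detg(\circo h) = \det(g^{-1}\circo h)$ is built from the endomorphism $[\circo h]^i_j = (g^{-1}\circo h)^i_j$, which by the remark following \eqref{eq:CChH} is \emph{invariant} under conformal deformation of the ambient metric; hence $\detg(\circo h)\, d\mu_g$ is a pointwise conformal invariant of weight $-4$ (the four factors of $\circo h$ scale as $e^{4\phi}$, cancelling the $e^{4\phi}$ in $d\mu_{\hat g}$ after the inverse-metric contractions contribute $e^{-8\phi}$; I would verify this weight bookkeeping explicitly). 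Since $\mc P$ is the difference of two global conformal invariants, it is itself a global conformal invariant, in particular invariant under M\"obius transformations of $\R^5$ centered off $f(\M^4)$.

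Second, for the vanishing of $\mc P$ on minimal hypersurfaces, I would simply invoke the clean expression \eqref{eq:dethH}, namely
\begin{equation*}
\detg(h)-\detg(\circo h)= H^4 - \tfrac{1}{2} H^2 |\circo h|^2 + \tfrac{1}{3} H \Tr_g(\circo h^3)\quad.
\end{equation*}
Every term on the right-hand side carries at least one factor of $H$, so the integrand vanishes pointwise whenever $H\equiv 0$; thus $\mc P$ vanishes identically on minimal hypersurfaces. This is the routine step.

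Third, and this is where the real work lies, I must show that a minimal hypersurface with $\Tr_g(\circo h^3)\equiv 0$ is a \emph{critical point} of $\mc P$. The strategy is to compute the first variation of $\mc P$ along a normal variation $f_t = f + t\,\xi N$, with $\xi \in C^\infty(\M)$ and $N$ the unit normal, and show $\frac{d}{dt}\big|_{t=0}\mc P(f_t)=0$ for every $\xi$. Because $\int_{\M}\detg(h)\,d\mu_g$ is topological, its variation vanishes identically, so it suffices to vary $\mc Q(f):=\int_{\M} \detg(\circo h)\,d\mu_g$, equivalently (by \eqref{eq:dethH}) the functional $\int_{\M}\big(H^4 - \tfrac12 H^2|\circo h|^2 + \tfrac13 H\Tr_g(\circo h^3)\big)\,d\mu_g$. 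I would expand the first variation term by term, recalling the standard first-variation formulas $\delta H$ (which involves $\Delta\xi$ and $|h|^2\xi$), $\delta(d\mu_g) = -m H\xi\, d\mu_g$ here with $m=4$, and the variations of $|\circo h|^2$ and $\Tr_g(\circo h^3)$. The crucial simplification is that we evaluate at a configuration where $H\equiv 0$: every term in the integrand of the varied functional that survives differentiation must retain at least one undifferentiated factor of $H$ or produce a factor involving $\Tr_g(\circo h^3)$. Setting $H=0$ kills all contributions except those arising from differentiating the single explicit power of $H$ in the term $\tfrac13 H\,\Tr_g(\circo h^3)$; that surviving contribution is proportional to $\int_{\M} (\delta H)\,\Tr_g(\circo h^3)\, d\mu_g$, which vanishes identically precisely under the hypothesis $\Tr_g(\circo h^3)\equiv 0$.

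The main obstacle will be the bookkeeping in this last step: organizing the first-variation computation so that the vanishing of $H$ is exploited cleanly, and verifying that \emph{no} term survives except the one multiplying $\Tr_g(\circo h^3)$. I would handle this by grouping the integrand as a polynomial in $H$ with coefficients built from $\circo h$, writing $\mc Q(f)=\int_{\M}\sum_{j\geq 1} H^j\, Q_j(\circo h)\, d\mu_g$ where $Q_1 = \tfrac13\Tr_g(\circo h^3)$, $Q_2 = -\tfrac12|\circo h|^2$, $Q_4=1$, and observing that under the normal variation at a point where $H=0$ the derivative of $H^j Q_j$ contributes only through $j H^{j-1}(\delta H) Q_j$, which vanishes for $j\geq 2$ and leaves exactly $\int_{\M}(\delta H)\, Q_1\, d\mu_g$; the terms from varying $Q_j$ and $d\mu_g$ all retain a factor $H$ and die. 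Thus criticality reduces to $Q_1 = \tfrac13\Tr_g(\circo h^3)\equiv 0$, as claimed.
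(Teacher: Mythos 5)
Your proposal is correct and follows essentially the same route as the paper: conformal invariance comes from splitting $\mc P$ into the topological Gauss--Bonnet term $\int_{\M}\detg(h)\,d\mu_g$ plus the pointwise conformal invariant $-\detg(\circo h)\,d\mu_g$, the vanishing on minimal hypersurfaces is read off from \eqref{eq:dethH}, and criticality follows because at $H\equiv 0$ the only surviving first-variation term is $\tfrac13(\delta H)\,\Tr_g(\circo h^3)$, which dies under the hypothesis $\Tr_g(\circo h^3)\equiv 0$. The only differences are cosmetic: the paper certifies the pointwise invariance via Lemma~\ref{lem:Trdet}, rewriting $-\detg(\circo h)=\tfrac14\Tr_g(\circo h^4)-\tfrac18|\circo h|^4$, whereas you use a direct weight count (which is the right justification --- note that the remark ``$[(h^{\hat g})^{\circ}]^i_j=[h^{\circ}]^i_j$'' you cite is off, since the endomorphism actually scales by $e^{-\phi}$, and it is precisely this scaling that makes $\detg(\circo h)\,d\mu_g$ invariant in dimension four, consistently with your explicit bookkeeping), and your polynomial-in-$H$ organization of the first variation simply makes explicit what the paper compresses into ``of course we obtain criticality.''
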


\begin{proof}
By using \eqref{eq:dethH},  we have that $\detg(h)- \detg(\circo h) $
 clearly vanishes if $H\equiv 0$. Observe that the right hand side of \eqref{eq:dethH} has a linear term in $H$, so a priori a minimal hypersurface is  not a critical point for $\mc P$, but if we assume also that  $\Tr_g(\circo h ^3)$ vanishes, of course we  obtain criticality. Since $\left[ \frac{1}{4} \Tr_g(\circo h^4)- \frac{1}{8}|\circo h|^4 \right] d \mu_g$ is a pointwise conformal invariant and  $\detg(h)$ is a topological invariant by \eqref{eq:GenGB}, the functional $\mc P$ is then invariant under conformal transformation of $\R^5$ preserving the topology of $f(\M)$, that is under conformal transformations of $\R^5$ centered off of $f(\M^4)$.
\end{proof}

\begin{remark}
Notice that, formally,  the integrand of the 4-d functional $\mc P$ is exactly the same as the 2-d Willmore functional $\mc W$, written as in \eqref{eq:Wdeth}.
\end{remark}

Since in 2-d the quantity $\detg(h)-\detg(h)=H^2$ is non-negative, it is natural to ask if the same is true in the 4-d case, that is if $\detg(g)-\detg(\circo h)$ is non negative.   Surprisingly this is not the case, for instance it is not difficult to compute that if the principal curvatures are $1,1,6,6,$ one has $\detg(h)-\detg(\circo h)=-\frac{49}{16}<0$. 
A natural question is then if we can manipulate $\mc P$ it in order to obtain a new functional which is still conformally invariant but this time is nonnegative definite; this is exactly question 2) above. To this aim observe that, by Young's inequality, we have that
\begin{eqnarray}
-\frac{1}{2} H^2 |\circo h|^2 &\geq& -\frac{\alpha}{4} H^4 - \frac{1}{4 \alpha} |\circo h|^4 \quad \text{for every } \alpha>0  \quad\text{and }\nonumber \\
\frac{1}{3} H \Tr_g(\circo h ^3) &\geq&  -\frac{\beta}{12} H^4 - \frac{1}{4 \beta^{1/3}}  [\Tr_g(\circo h ^3)]^{4/3}   \quad \text{for every } \beta>0 \quad. \nonumber
\end{eqnarray}
The last two lower bounds combined with \eqref{eq:detg-1h} give
\begin{equation}\label{eq:Pab}
P_{(\alpha, \beta)} (g,h):= \detg(h)+ \frac{1}{4} \Tr_g(\circo h^4)+\left(\frac{1}{4 \alpha} -\frac{1}{8} \right) |\circo h|^4 + \frac{1}{4 \beta^{1/3}}  [\Tr_g(\circo h ^3)]^{4/3}  \geq \left(1- \frac{3\alpha+\beta}{12} \right) H^4.  
\end{equation}
We have therefore answered  question 2) above:

\begin{proposition}\label{prop:2}
Given an isometric  immersion $f:(\M^4,g)\hookrightarrow (\R^5, \delta_{\mu\nu})$ of a closed $4$-manifold  $(\M^4,g)$,  let $ P_{(\alpha, \beta)}$ be the following expression
\begin{equation}\label{eq:defPab}
P_{(\alpha, \beta)} (g,h):= \detg(h)+ \frac{1}{4} \Tr_g(\circo h^4)+\left(\frac{1}{4 \alpha} -\frac{1}{8} \right) |\circo h|^4 + \frac{1}{4 \beta^{1/3}}  [\Tr_g(\circo h ^3)]^{4/3} \quad. 
\end{equation}
Then, the  functional 
$$\mc P_{(\alpha, \beta)} (f):= \int_{\M} P_{(\alpha, \beta)} (g,h) d \mu_g  $$
is invariant under conformal transformations of $\R^5$ centered off of $f(\M^4)$. 
\\Moreover, if $3\alpha+\beta \leq 12$, then $P_{(\alpha, \beta)}(g,h)\geq 0$  for every immersed hypersurface $f(\M^4)\subset \R^5$ and $P_{(\alpha, \beta)} (g,h)\equiv 0$ on $E\subset f(\M^4)$ if and only if $H\equiv 0$ on $E$.
\end{proposition}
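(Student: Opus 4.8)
The plan is to prove the two assertions — conformal invariance of $\mc P_{\alpha\beta}$ and pointwise nonnegativity of $P_{\alpha\beta}$ — separately, exploiting the fact that each term appearing in the definition \eqref{eq:defPab} has already been arranged for exactly this purpose in the computations preceding the statement.

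For the conformal invariance I would first split the integrand as
$$P_{\alpha\beta}(g,h)\, d\mu_g = \detg(h)\, d\mu_g + \Big[\tfrac14 \Tr_g(\circo h^4) + \big(\tfrac{1}{4\alpha}-\tfrac18\big)|\circo h|^4 + \tfrac{1}{4\beta^{1/3}}[\Tr_g(\circo h^3)]^{4/3}\Big]\, d\mu_g.$$
For the first summand I would invoke \eqref{eq:GenGB}, which identifies $\int_\M \detg(h)\,d\mu_g$ with a multiple of $\chi(\M)$; since a M\"obius transformation of $\R^5$ centered off $f(\M)$ maps $f(\M)$ to a closed immersed hypersurface diffeomorphic to it, the Euler characteristic, hence this integral, is unchanged. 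For the remaining three terms I would check by \eqref{eq:CChH} that each is a complete contraction of copies of $\circo h$ against the inverse metric of total weight $-4$: each factor $\circo h$ scales by $e^{\phi}$ and each $g^{-1}$ by $e^{-2\phi}$, while the fractional power turns the weight $-3$ of $\Tr_g(\circo h^3)$ into the weight $-4$ of $[\Tr_g(\circo h^3)]^{4/3}$. Multiplied by $d\mu_g$, which scales by $e^{4\phi}$, each term is therefore a \emph{pointwise} conformal invariant, and since a M\"obius transformation centered off $f(\M)$ induces precisely a conformal rescaling of the induced metric, these integrals are preserved as well.

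For the nonnegativity I would start from the expansion \eqref{eq:detg-1h}, add the term $\tfrac14\Tr_g(\circo h^4)$ (the manipulation that, via Lemma \ref{lem:Trdet}, already produced \eqref{eq:dethH}) and collect the two $|\circo h|^4$ contributions into $\tfrac{1}{4\alpha}|\circo h|^4$, so that $P_{\alpha\beta}$ becomes $H^4 - \tfrac12 H^2|\circo h|^2 + \tfrac13 H\Tr_g(\circo h^3) + \tfrac{1}{4\alpha}|\circo h|^4 + \tfrac{1}{4\beta^{1/3}}[\Tr_g(\circo h^3)]^{4/3}$. I would then apply pointwise the two Young inequalities stated just before \eqref{eq:Pab} to bound the cross terms $-\tfrac12 H^2|\circo h|^2$ and $\tfrac13 H\Tr_g(\circo h^3)$ from below. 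The $|\circo h|^4$ and $[\Tr_g(\circo h^3)]^{4/3}$ contributions produced by these estimates then cancel exactly against the two quartic terms, leaving the bound $P_{\alpha\beta}(g,h) \geq (1 - \tfrac{3\alpha+\beta}{12})H^4$, which is \eqref{eq:Pab}; under the hypothesis $3\alpha+\beta \leq 12$ the prefactor is nonnegative, giving $P_{\alpha\beta}(g,h)\geq 0$ pointwise.

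The routine parts are the weight bookkeeping and the two Young estimates. The one point demanding care in the nonnegativity argument is the \emph{exact} cancellation of the two quartic terms: this is not accidental but is forced by the coefficients chosen in \eqref{eq:defPab}, which are precisely those matching the conjugate exponents of the two Young inequalities to the parameters $\alpha$ and $\beta$. On the conformal side the main subtlety I expect is conceptual rather than computational, namely that the "centered off $f(\M)$" hypothesis is needed only to guarantee that the Gauss--Bonnet/Pfaffian term retains its topological value, whereas the three remaining terms are genuinely pointwise conformal invariants and therefore need no such restriction.
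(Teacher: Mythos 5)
Your proposal is correct and follows essentially the same route as the paper: the nonnegativity is exactly the paper's argument of applying the two Young inequalities to the expansion \eqref{eq:detg-1h} so that the $|\circo h|^4$ and $[\Tr_g(\circo h^3)]^{4/3}$ terms cancel and one is left with \eqref{eq:Pab}, and the conformal invariance is the paper's splitting into the topological Gauss--Bonnet term $\int_{\M}\detg(h)\,d\mu_g$ from \eqref{eq:GenGB} plus three terms which, by the scaling \eqref{eq:CChH} of $\circo h$, are pointwise conformal invariants of weight $-4$ once multiplied by $d\mu_g$. There is nothing to correct.
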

 
 We now answer the last question 3) by proving that round spheres are the points of strict global minimum for the functional $\mc P_{(\alpha, \beta)}$, for   $\alpha, \beta>0$ with  $\alpha \leq 2$ and $3\alpha+\beta \leq 12$. This result may be seen as the 4-d analogue of the celebrated  2-d   theorem of Willmore \cite{Will} asserting that the  Willmore functional is strictly minimized by embedded round 2-d spheres of $\R^3$.

\begin{theorem} \label{thm:3}
For any smooth isometric  immersion $f:(\M^4,g)\hookrightarrow (\R^5, \delta_{\mu\nu})$ of a closed connected $4$-manifold  $(\M^4,g)$, let 
$$F(g,h):=\detg(h)+Z(\circo h), $$
where $Z(\circo h)$ is a non negative real function of the eigenvalues of $\circo h$, homogeneous of degree $4$ (that is $Z(t \circo h)=t^4 Z(\circo h)$, for every $t \in \R$), such that
\begin{enumerate}
\item $F(g,h)$ is always non-negative and if $F(g,h)=0$ then $H=\frac{1}{4} \Tr_{g}(h)=0$,
\item if  $Z(\circo h)=0$ then $\circo h=0$.
\end{enumerate}

Then the functional
$$\mc F(f):= \int_{\M} \left[ \detg(h)+Z(\circo h) \right] \, d\mu_g $$
is invariant under conformal transformations of $\R^5$ centered off of $f(\M^4)$ and 
\begin{align}\label{eq:LBmcF}
\mc F (f)   \geq \frac{ 8 \pi ^ 2 } { 3}  \quad,
\end{align}
with equality if and only if $f(\M)$ is embedded in $ \R ^ 5 $ as a round sphere. 
\\In particular, this holds for the functional $\mc  P_{(\alpha, \beta)}$ above provided $\alpha, \beta>0$, $\alpha \leq 2$ and $3\alpha+\beta \leq 12$.
\end{theorem}

\begin{proof}
Since by assumption both of the functions $F(g,h)$  and  $Z(\circo h)$ are non-negative, denoting
$$\M^+:=\{x \in \M: \detg(h)\geq 0\} \subset \M \quad,$$
we  have 
\begin{equation}\label{eq:intPab}
\mc F (f)= \int_{\M} F(g,h) d \mu_g  \geq \int_{\M^{+}} F(g,h) d \mu_g =\int_{\M^{+}} \big[  \detg(h)+Z(\circo h) \big] d \mu_g  \geq  \int_{\M^+} \detg(h) \, d \mu_g,
\end{equation}
with equality in the first estimate if and only if  $H\equiv 0$ on $\M\setminus \M^{+}$ and equality in the
second estimate if and only if $\circo h\equiv 0$ on $\M^{+}$. We claim this is possible only if $f(\M)$ is
embedded as a round sphere.

We first show that $\M\setminus \M^{+}=\emptyset$. Indeed, since $\M\setminus \M^{+}\subset \M$ is open, if it was not
empty it would be a piece of a minimal hypersurface. On the other hand, the connected
components of $f(\M^{+})$ are embedded as subsets of either round spheres or affine hyperplanes.
Thus, by the continuity of $H$ and the connectedness of $\M$, only the latter case
can occur. It follows that $f(\M)$ is a closed minimal hypersurface immersed in $\R^{5}$; but
this is a contradiction since in $\R^{n}$ there are no closed minimal immersed hypersurfaces
(one can argue either by maximum principle or via monotonicity formula, the argument
is very classical so we don't repeat it here).
We have just proved that $\M=\M^{+}$. Therefore $f(\M)\subset \R^{5}$ is embedded either as an
affine hyperplane or as a round sphere, and by the same argument above only the latter case
can occur.

We now prove the validity of the lower bound \eqref{eq:LBmcF}.
Observing that $|\detg(h)|=|\detg(D\gamma)|= |J(\gamma)|$, where $\gamma: \M^4 \to S^4$ is the Gauss map of the immersion $f$ and $J(\gamma)$ is its Jacobian, we get
\begin{equation}\label{eq:gammaJ}
\int_{\M^+} \detg(h) \, d \mu_g = \int_{\M^+}  |J(\gamma)| d \mu_g \geq Vol_{S^4}(\gamma(\M^+)) \quad,
\end{equation}
where  $Vol_{S^4}(\gamma(\M^+))$ is the volume of $\gamma(\M^+)\subset S^4$ with respect to the volume form of $S^4$.  But now it is well known that $\gamma(\M^+)=S^4$,  since $f(\M^4)$ is a \emph{closed} hypersurface (the standard argument is to consider, for every $\nu \in S^4$, an affine  hyperplane of $\R^5$ orthogonal to $\nu$ and  very far from $f(\M^4)$;  then one translates such affine hyperplane towards $f(\M)$, keeping the orthogonality with $\nu$,  up to the first tangency point $f(x)$. There,  one has $\gamma(x)=\nu$ and moreover, since by the construction above $f(x)$ is the first tangency point,  $f(\M^4)$ must lie on just one side of its  affine tangent space at $f(x)$, namely such translated hyperplane.
Therefore all the eigenvalues of $g^{-1} h$ at $x$ have the same sign and thus, in particular,  $\detg(h)|_{x}\geq 0$. This proves that $\nu \in \gamma(\M^+)$).    

Combining this last fact with \eqref{eq:intPab} and  \eqref{eq:gammaJ}, we get that 
$$\mc F (f) \geq |S^4| =\frac{8 \pi^2}{3}\quad.  $$
Equality in the last formula of course implies equality in \eqref{eq:intPab} and  \eqref{eq:gammaJ} but, as already observed above, equality in \eqref{eq:intPab} implies that $f(\M^4)$ is embedded as a round sphere in $\R^5$. 

To see that $\mc P_{(\alpha, \beta)}$ satisfies the assumptions, observe first that 1) is ensured by Proposition \ref{prop:2}. Regarding 2) note that  if $\alpha, \beta>0$ and   $\alpha \leq 2$, then
$$
P_{(\alpha, \beta)} (g,h):= \detg(h)+ \frac{1}{4} \Tr_g(\circo h^4)+\left(\frac{1}{4 \alpha} -\frac{1}{8} \right) |\circo h|^4 + \frac{1}{4 \beta^{1/3}}  [\Tr_g(\circo h ^3)]^{4/3} \geq  \detg(h),
$$
with equality if and only if $\Tr_g(\circo h^4)=|\circo h^2|^2 \equiv 0$; but that happens if and only if $\circo h\equiv 0$. 
\end{proof}

By a similar proof one can show the following higher dimensional generalization, however obtaining a comparably explicit formula for $F(g,h)$ is less clear. 

\begin{theorem} \label{thm:4}
For any smooth isometric  immersion $f:(\M^{2n},g)\hookrightarrow (\R^{2n+1}, \delta_{\mu\nu})$ of a closed connected $2n$-manifold  $(\M^{2n},g)$, let 
$$F(g,h):=\detg(h)+Z(\circo h), $$
where $Z(\circo h)$ is a non negative real function of the eigenvalues of $\circo h$, homogenous of degree $2n$ (that is $Z(t \circo h)=t^{2n} Z(\circo h)$, for every $t \in \R$), such that 
\begin{enumerate}
\item $F(g,h)$ is always non-negative and if $F(g,h)=0$ then $H=\frac{1}{2n} \Tr_{g}(h)=0$,
\item if  $Z(\circo h)=0$ then $\circo h=0$.
\end{enumerate}
Then the functional
$$\mc F(f):= \int_{\M} \left[ \detg(h)+Z(\circo h) \right] \, d\mu_g $$
is invariant under conformal transformations of $\R^{2n+1}$ centered off of $f(\M^{2n})$ and 
\begin{align*}
\mc F (f)   \geq \omega_{2n} \quad,
\end{align*}
where $\omega_{2n}$ is the hypersurface area of a unit sphere $ \mbb{S}^{2n}\subset \R ^{2 n+1}$ with equality if and only if $f(\M)$ is embedded in $ \R ^ {2n+1} $ as a round sphere. 
\\There exists $C=C(n)>0$ such that, setting $Z(\circo h)=C \| \circo h \|^{2n}$, the function  $$F(g,h)= \detg(h)+  C \| \circo h \|^{2n}$$
satisfies the assumptions 1) and 2) above.
\end{theorem}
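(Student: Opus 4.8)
The plan is to follow the three-step scheme of the proof of Theorem \ref{thm:3} verbatim, replacing $S^4$ by $S^{2n}$ throughout, and to supply a soft compactness argument where Proposition \ref{prop:2} relied on an explicit algebraic estimate. For the conformal invariance I would treat the two summands separately. For $\detg(h)$ I would invoke the even-dimensional Chern--Gauss--Bonnet theorem for closed Euclidean hypersurfaces: via the Gauss equation the Pfaffian of the intrinsic curvature of $(\M^{2n},g)$ is a fixed dimensional multiple of $\detg(h)$, so $\int_\M\detg(h)\,d\mu_g$ is a constant multiple of $\chi(\M)$ (equivalently of $\deg\gamma$), just as in \eqref{eq:GenGB}; being topological it is a fortiori invariant under any conformal map of $\R^{2n+1}$ centered off $f(\M^{2n})$, since such maps preserve the topology of the image. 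For $Z(\circo h)$ I would use \eqref{eq:CChH}: since $(h^{\hat g})^\circ=e^\phi h^\circ$ together with $\hat g=e^{2\phi}g$ forces each eigenvalue of $\circo h$ relative to $g$ (equivalently $\|\circo h\|$) to rescale by $e^{-\phi}$, the degree-$2n$ homogeneity of $Z$ yields $Z(\circo h)\mapsto e^{-2n\phi}Z(\circo h)$, which exactly cancels $d\mu_{\hat g}=e^{2n\phi}\,d\mu_g$; hence $Z(\circo h)\,d\mu_g$ is a pointwise conformal invariant and $\mc F$ is conformally invariant.

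For the sharp lower bound I would use $Z\geq 0$ and set $\M^+:=\{x\in\M:\detg(h)(x)\geq 0\}$, estimating
\[
\mc F(f)\geq\int_\M\detg(h)\,d\mu_g\geq\int_{\M^+}\detg(h)\,d\mu_g,
\]
the first inequality being an equality iff $Z(\circo h)\equiv 0$, i.e. (by the hypothesis on $Z$) iff $\circo h\equiv 0$, i.e. iff the closed hypersurface $f(\M)$ is totally umbilic, hence a round sphere. Identifying $|\detg(h)|=|J(\gamma)|$ for the Gauss map $\gamma:\M^{2n}\to S^{2n}$ and applying the area formula gives $\int_{\M^+}\detg(h)\,d\mu_g\geq\vol_{S^{2n}}(\gamma(\M^+))$, while the supporting-hyperplane argument used verbatim in Theorem \ref{thm:3} shows $\gamma(\M^+)=S^{2n}$. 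Combining, $\mc F(f)\geq\omega_{2n}$, with equality forcing $\circo h\equiv 0$, i.e. a round sphere.

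For the existence of $C=C(n)$ I would argue by homogeneity and compactness. Diagonalizing $h$ with principal curvatures $\kappa_i=H+\mu_i$, where the $\mu_i$ are the eigenvalues of $\circo h$ and $\sum_i\mu_i=0$ by \eqref{eq:Splith}, one has $\detg(h)=\prod_{i=1}^{2n}(H+\mu_i)$ and $\|\circo h\|^{2n}=\big(\sum_i\mu_i^2\big)^n$, both homogeneous of degree $2n$ in $(H,\mu)$; thus it suffices to bound $\detg(h)$ below on the compact slice $\{\sum_i\mu_i=0,\ \sum_i\mu_i^2=1\}$ and take $C:=\max\{0,-\inf\detg(h)\}$. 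On that slice the $\mu_i$ range over a compact set while, for each fixed $\mu$, $\detg(h)=\prod_i(H+\mu_i)$ is a monic polynomial of even degree $2n$ in $H$, so $\detg(h)\to+\infty$ uniformly as $|H|\to\infty$; its infimum over the slice is therefore finite, giving a finite $C$. I expect this last step to be the main obstacle: unlike the case $2n=4$ of Proposition \ref{prop:2}, where Young's inequality applied to the few explicit traces produced an \emph{explicit} admissible $C$, here no such clean identity is available, so one can only assert the \emph{existence} of $C$ by compactness --- which is precisely what the statement claims.
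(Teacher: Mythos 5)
Your proposal is correct, and its interesting feature is that it diverges from the paper precisely on the one step the paper proves in detail. For the conformal invariance and the bound $\mc F(f)\geq\omega_{2n}$ you follow the paper's own scheme (which it declares ``analogous'' to Theorem \ref{thm:3}): the decomposition into the topological term $\int_{\M}\detg(h)\,d\mu_g$, a fixed multiple of $\deg(\gamma)$ by Chern--Gauss--Bonnet, plus the pointwise conformally invariant density $Z(\circo h)\,d\mu_g$ (your eigenvalue-scaling computation, with each eigenvalue of $g^{-1}\circo h$ rescaling by $e^{-\phi}$, is the right justification and fixes a small inaccuracy in the paper's claim that the endomorphism $[h^\circ]^i_j$ is invariant); then $\M^+$, the area formula for the Gauss map, and the supporting-hyperplane argument giving $\gamma(\M^+)=S^{2n}$, with equality forcing $\circo h\equiv 0$. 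Where you differ is the existence of $C(n)$: the paper expands $\detg\bigl(\circo h+\tfrac{H}{2n}g\bigr)$ in powers of $H$, observes that the degree-$(2n-1)$ term is absent since $\tr_g(\circo h)=0$, bounds each mixed term $H^kP_k(\circo h)$ by $\e H^{2n}+C_\e\|\circo h\|^{2n}$ via Young's inequality, bounds $\detg(\circo h)\geq -C'\|\circo h\|^{2n}$, and absorbs the $\e$-terms into the positive pure power of $H$; this yields the stronger pointwise conclusion $\detg(h)+C\|\circo h\|^{2n}\geq c_n H^{2n}$ with an explicit positive coefficient on $H^{2n}$, and a constant that is in principle traceable. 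Your route---homogeneity reduces the claim to the slice $\{\textstyle\sum_i\mu_i=0,\ \sum_i\mu_i^2=1\}$, where $\prod_i(H+\mu_i)$ is a monic polynomial of even degree in $H$ whose lower-order coefficients are uniformly bounded, so its infimum over the slice is finite---is shorter and avoids the expansion entirely, but is purely existential and gives no quantitative control in $H$. Both are sound; the only cosmetic repair needed on your side is that $C:=\max\{0,-\inf\detg(h)\}$ could vanish, whereas the statement asks for $C>0$, so take any strictly larger constant (harmless, since the inequality is monotone in $C$).
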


\begin{proof}
The argument for the  first part of the claim is analogous to the proof of Theorem \ref{thm:3}, so we will just show the last statement.
\\Using the orthogonal decomposition for the second fundamental form $ h = \circo h + H g $, clearly one has 
\begin{align*}
\detg  (h)  = \detg \left( \circo h + H  g \right).
\end{align*} 
Now, expanding the determinant, we have that 
\begin{align*}
  \detg\left( \circo h +  H g \right) = \detg (\circo h)  + \sum_{k = 1 } ^ { 2n -2 } H ^ k P_{k}(\circo h ) + H  ^ {2n} \quad,
\end{align*}
where $ P_{k}( \circo h ) $ is a complete contraction of order $ 2n-k$. Note that the term corresponding to $k =2n-1$ does not appear as $\tr_g (\circo h)  = 0 $. We can now apply Young's inequality to conclude
\begin{align*}
\sum_{ k =1 } ^{2n-2} H ^ k P_{ k } (\circo h ) \geq - \e H ^ {2n} - C_\e  \|\circo h \| ^ {2n}.  
\end{align*}
Furthermore, as $ \detg ( \circo h) \geq  -C_{n} \| \circo h \| ^ {2n}$, we find
 \begin{align*}
\detg (h) \geq \left(1 - \e \right) H ^ {2n} - C_{n,\e} \|\circo h \| ^ {2n } .
\end{align*}
Choosing $\e=1/2$ we then get that there exists $C=C(n)$ such that $\detg (h) + C  \|\circo h \| ^ {2n } \geq  \frac{H^{2n}}{2}$, as desired. 
\end{proof}

\end{document}